\newcommand{\executeiffilenewer}[3]{%
 \ifnum\pdfstrcmp{\pdffilemoddate{#1}}%
 {\pdffilemoddate{#2}}>0%
 {\immediate\write18{#3}}\fi%
}
\newcommand{%
% \executeiffilenewer{#1.svg}{#1.pdf}%
% {inkscape -z -D --file=#1.svg %
% --export-pdf=#1.pdf --export-latex}%
 \input{.pdf_tex}%
}[1]{%
% \executeiffilenewer{#1.svg}{#1.pdf}%
% {inkscape -z -D --file=#1.svg %
% --export-pdf=#1.pdf --export-latex}%
 \input{#1.pdf_tex}%
}
\newtheorem{theorem}{Theorem}[section]
\newtheorem{lemma}[theorem]{Lemma}
\newtheorem{proposition}[theorem]{Proposition}
\newenvironment{proof}[1][Proof]{\begin{trivlist}
\item[\hskip \labelsep {\bfseries #1}]}{\end{trivlist}}
\newcommand{\qed}{\nobreak \ifvmode \relax \else
      \ifdim\lastskip<1.5em \hskip-\lastskip
      \hskip1.5em plus0em minus0.5em \fi \nobreak
      \vrule height0.75em width0.5em depth0.25em\fi}
\pgfplotsset{every tick label/.append style={font=\footnotesize}}
\newcommand{\TT}{\ensuremath{\mathsf{\tiny{T}}}}
\newcommand{\T}{^{\TT}}
\newcommand{\diff}[1][]{\mathrm{d}#1}
\icmltitlerunning{Continuous-time Lower Bounds}
\begin{document}

\twocolumn[
\icmltitle{Continuous-time Lower Bounds for Gradient-based Algorithms}

% It is OKAY to include author information, even for blind
% submissions: the style file will automatically remove it for you
% unless you've provided the [accepted] option to the icml2019
% package.

% List of affiliations: The first argument should be a (short)
% identifier you will use later to specify author affiliations
% Academic affiliations should list Department, University, City, Region, Country
% Industry affiliations should list Company, City, Region, Country

% You can specify symbols, otherwise they are numbered in order.
% Ideally, you should not use this facility. Affiliations will be numbered
% in order of appearance and this is the preferred way.
% \icmlsetsymbol{equal}{*}

\begin{icmlauthorlist}
\icmlauthor{Michael Muehlebach}{berk}
\icmlauthor{Michael I. Jordan}{berk}
\end{icmlauthorlist}

\icmlaffiliation{berk}{Division of Electrical Engineering and Computer Science, and Department of Statistics, University of California, Berkeley, Berkeley, USA}

\icmlcorrespondingauthor{Michael Muehlebach}{michaelm@berkeley.edu}

% You may provide any keywords that you
% find helpful for describing your paper; these are used to populate
% the "keywords" metadata in the PDF but will not be shown in the document
\icmlkeywords{mathematical optimization, continuous-time lower bounds, non-convex optimization}

\vskip 0.3in
]

% this must go after the closing bracket ] following \twocolumn[ ...

% This command actually creates the footnote in the first column
% listing the affiliations and the copyright notice.
% The command takes one argument, which is text to display at the start of the footnote.
% The \icmlEqualContribution command is standard text for equal contribution.
% Remove it (just {}) if you do not need this facility.

\printAffiliationsAndNotice{}  % leave blank if no need to mention equal contribution
%\printAffiliationsAndNotice{\icmlEqualContribution} % otherwise use the standard text.

\hyphenation{dy-na-mi-cal}
\hyphenation{cir-cum-stances}

\begin{abstract}
This article derives lower bounds on the convergence rate of continuous-time gradient-based optimization algorithms. The algorithms are subjected to a time-normalization constraint that avoids a reparametrization of time in order to make the discussion of continuous-time convergence rates meaningful. We reduce the multi-dimensional problem to a single dimension, recover well-known lower bounds from the discrete-time setting, and provide insight into why these lower bounds occur. We present algorithms that achieve the proposed lower bounds, even when the function class under consideration includes certain nonconvex functions.
\end{abstract}

\section{Introduction}
Many problems in machine learning and statistics can be formulated as optimization problems. First-order optimization algorithms, such as gradient descent, are commonly used due to their simplicity and due to the fact that their complexity scales mildly in the number of decision variables. These algorithms are known to have natural limits on their convergence rate. We will examine these complexity lower bounds from a dynamical systems perspective.  Unusually for the literature on lower bounds, we will work in continuous time. Our continuous-time perspective not only leads to insights into why complexity bounds arise, but also provides guidance for algorithm design.

A commonly used technique for deriving lower bounds is to construct a function that is difficult to optimize~\citep[see, e.g.,][p.~59]{NesterovBook}. Such a function is typically obtained by ensuring that, at the $j$th iteration, all gradients that an algorithm has evaluated so far belong to a $(j+1)$-dimensional subspace which is far away from the optimum. Dimension-independent lower bounds then result from an unbounded increase in the problem dimension. This establishes, for example, that for the class of smooth and strongly convex functions at least $\sqrt{\kappa}/2~\text{ln}(c/\epsilon)$ iterations are needed to achieve an $\epsilon$-distance to the optimizer (for large $\kappa$), where $c$ is a constant and $\kappa$ refers to the condition number \citep[][p.~68]{NesterovBook}. The lower bound is achieved by accelerated gradient methods; for example, \citet{TripleMomentum} provide an algorithm that attains the lower bound up to a factor of two. In the nonconvex setting, deriving tight lower bounds for smooth functions is an active area of research, where important recent contributions have been made by \citet{Duchi}, for example.

We are motivated by a line of recent work that views optimization algorithms as continuous-time dynamical systems \citep[see, e.g.,][]{SuAcc, KricheneAcc, WibisonoVariational, Diakonakolis,MuehlebachJordan}.  This work has provided significant insight into convergence rates of discrete-time algorithms via translating \emph{upper bounds} from continuous time to discrete time. We ask the question whether it is also possible to obtain insight into \emph{lower bounds} on gradient-based algorithms via a continuous-time analysis. 

Instead of constructing a function that is difficult to optimize, we exploit invariance properties of the function class under consideration, which, combined with a local analysis about a (local) minimum, greatly simplifies the dynamics that need to be considered. This reduces the problem of determining the worst-case convergence rate over the given class of functions to the analysis of a parameter-dependent characteristic polynomial. We show that under certain circumstances the classical dimension-independent discrete-time lower bound for smooth and strongly convex functions can be recovered. We also derive continuous-time algorithms that achieve faster convergence rates. These algorithms include very fast dynamics and it is a matter of future research to investigate whether it is possible to derive practical discretizations of these dynamics. %This opens up different avenues for relating results in optimization to numerical analysis.

A related---but discrete-time---perspective has been presented by \citet{Arjevani}, where optimization algorithms are modeled by $k$th-order linear dynamical systems, and the complexity is shown to be lower bounded by $\Omega(\kappa^{1/k}~ \text{ln}(1/\epsilon))$.\footnote{Here, $k$th order refers to the fact that the $j$th iterate depends only on the past $k$ iterates. This should not be confused with accessing higher derivatives of the objective function.} In contrast, we model algorithms as continuous-time nonlinear dynamical systems and show how the lower bound $\Omega(\sqrt{\kappa}~\text{ln}(1/\epsilon))$ for the class of strongly convex quadratic functions can be recovered.

\subsection{Notation and outline}
We focus on optimizing real-valued functions $f:\mathbb{R}^n \rightarrow \mathbb{R}$, where $n>0$ is an integer. Without loss of generality, we assume that the functions $f$ have a local minimum at $x=0$ with value $f(0)=0$. The functions are assumed to have Lipschitz-continuous gradients. Our aim is to find a lower bound on the convergence rate that any continuous-time gradient-based algorithm can possibly achieve on a given class of functions. This class of functions will be denoted by $C_{\mu,L}$ and is required to satisfy the following assumptions:
\vspace{-5pt}
\begin{itemize}
\item[(C1)] Each $f\in C_{\mu,L}$ is twice continuously differentiable in a neighborhood of the origin.
\item[(C2)] For every $f\in C_{\mu,L}$, it holds that 
\begin{equation*}
\text{spec}(\Delta f(0)) \subset [\mu,L],
\end{equation*}
where $0<\mu\leq L$ are fixed constants, $\text{spec}$ denotes the spectrum, and $\Delta f(0)$ refers to the Hessian of the function $f$ evaluated at the origin. Conversely, for any $\lambda_f\in [\mu,L]$, there exists a function $f\in C_{\mu,L}$ such that 
\begin{equation*}
\lambda_f\in \text{spec}(\Delta f(0)).
\end{equation*}
\item[(C3)] The class $C_{\mu,L}$ is invariant under orthogonal transformations. In other words, $f\in C_{\mu,L}$ implies that $f\circ T\in C_{\mu,L}$ for all $T\in O(n)$, where $O(n)$ denotes the set of orthogonal matrices of size $n\times n$.
\end{itemize}
\vspace{-5pt}
Assumption (C1) imposes local smoothness and Assumption (C2) encodes prior information about the local curvature. It excludes degeneracies, which arise either due to a non-isolated minimum, or when the curvature about the minimum is arbitrarily small. Assumption (C3) implies that the function class $C_{\mu,L}$ is invariant under permutations and rotations. As we shall see in the sequel, this has important implications for algorithm design. 

Given these assumptions, the lower bounds that we derive  apply to smooth nonconvex functions with isolated non-degenerate critical points, convex quadratic functions, and smooth and strongly convex functions. The assumptions emphasize the importance of the local shape of the objective function about a local minimum. From a dynamical systems perspective imposing limits on the local instead of the global structure seems more natural. Even though our analysis includes results about certain nonconvex functions, we will not consider the impact of saddle points, for example, which greatly limits the convergence rate~\cite{Chi,Duchi}.

The complexity of an algorithm can be characterized by the number of iterations required to achieve an $\epsilon$-distance to the optimizer. In the following, it will be more convenient to characterize the convergence rate. We say that an algorithm converges with rate $\rho>0$, if the distance to the local optimum decays at least with $\exp(-\rho t)$ for a certain set of initial conditions, where $t$ refers to time. Both notions are equivalent. However, an upper bound on the convergence rate leads to a lower bound on the complexity, and vice versa. For example, if the convergence rate is upper bounded by $\mathcal{O}(1/\kappa)$, the complexity is lower bounded by $\Omega(\kappa~\text{ln}(1/\epsilon))$.

The article is structured as follows: Sec.~\ref{Sec:mod} introduces the class of algorithms that are studied. The resulting lower bounds are presented in Sec.~\ref{Sec:LBsec} and simulation results are provided in Sec.~\ref{Sec:simRes}. The article concludes with a brief discussion in Sec.~\ref{Sec:Conclusion}.

\section{Continuous-time Gradient-based Optimization Algorithms}\label{Sec:mod}
We model a gradient-based optimization algorithm as a dynamical system of the form
\begin{multline}
x^{(k)}(t)=g(x^{(k-1)}(t), \dots, \dot{x}(t), x(t), \\
	\nabla f[h(x(t),\dot{x}(t),\dots, x^{(k-1)}(t))~]~), \label{eq:ode}
\end{multline}
where the functions $g: \mathbb{R}^{n\times k}\times \mathbb{R}^n \rightarrow \mathbb{R}^n$ and $h:\mathbb{R}^{n\times k} \rightarrow \mathbb{R}^n$ are independent of $f$, where $k>0$ is an integer, and where $x^{(p)}(t)$ denotes the $p$th derivative with respect to time. We say that $(g,h)$ is a continuous-time gradient-based optimization algorithm, $(g,h)\in \mathcal{G}$, if \eqref{eq:ode} has the following properties (for all $f\in C_{\mu,L}$):
\vspace{-5pt}
\begin{itemize}
\item[(G1)] $g$ and $h$ are continuously differentiable in all arguments,
\item[(G2)] critical points of $f$ correspond to equilibria of \eqref{eq:ode},
\item[(G3)] local minima of $f$ correspond to asymptotically stable equilibria of \eqref{eq:ode} (in the sense of Lyapunov).
\end{itemize}
\vspace{-5pt}
Assumption (G1) implies that \eqref{eq:ode} is a well-posed differential equation. Assumption (G2) and (G3) ensures that the algorithm locally converges to local minima. Assumption (G1)-(G3) are therefore minimal requirements for ensuring that \eqref{eq:ode} minimizes $f$. A graphical representation of the system \eqref{eq:ode} is shown in Fig.~\ref{Fig:Feedback}.

\emph{Remark:} We model a gradient-based optimization algorithm as an autonomous dynamical system. On a fundamental level, introducing non-autonomous dynamics would lead to a time-varying vector field, which contrasts with the fact that the objective function $f$ is fixed. In physics and engineering, non-autonomous dynamical systems typically arise in situations where only a sub-component of a system is studied. In that case, the interactions of the sub-component with the rest might lead to non-autonomous dynamics, even though the system as a whole is autonomous. Thus, non-autonomous dynamics, as introduced in \citet{SuAcc}, for example, might be useful to approximate \eqref{eq:ode} with a reduced-order model, whereby higher-order dynamics are captured by time-varying terms. In the following, we will characterize fundamental limits on the convergence rate for any integer $k>0$; therefore there is no need to render $g$ and $h$ time varying.

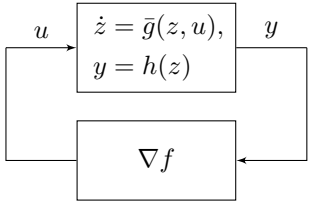
\begin{figure}
\center
\tikzstyle{block} = [draw, rectangle, 
    minimum height=3em, minimum width=6em]
\tikzstyle{sum} = [draw, fill=blue!20, circle, node distance=1cm]
\tikzstyle{input} = [coordinate]
\tikzstyle{output} = [coordinate]
\tikzstyle{pinstyle} = [pin edge={to-,thin,black}]

% The block diagram code is probably more verbose than necessary
\begin{tikzpicture}[auto, node distance=2cm,>=latex']
    % We start by placing the blocks
    \node [block] (system) {$\begin{aligned}
            								\dot{z}&=\bar{g}(z,u),\\
            								 y&=h(z)
            								 \end{aligned}$};
            								 
    \node [block, below of=system,node distance=1.5cm] (grad) {$\nabla f$};

    % We draw an edge between the controller and system block to 
    % calculate the coordinate u. We need it to place the measurement block. 
    \node (rn) [coordinate,right of=system] {};
 	\node (ln) [coordinate,left of=system] {};
 	\draw [-] (system) -- node[name=y]{$y$} (rn);   
 	\draw [->] (rn) |- (grad); 
	\draw [-] (grad) -| (ln);
	\draw [->] (ln) -- node[name=u]{$u$} (system);
    % Once the nodes are placed, connecting them is easy. 
%    \draw [draw,->] (input) -- node {$r$} (sum);
%    \draw [->] (sum) -- node {$e$} (controller);
%    \draw [->] (system) -- node [name=y] {$y$}(output);
%    \draw [->] (y) |- (measurements);
%    \draw [->] (measurements) -| node[pos=0.99] {$-$} 
%        node [near end] {$y_m$} (sum);
\end{tikzpicture}
\caption{Graphical representation of the dynamics \eqref{eq:ode}, where $z$ denotes the internal state, and $\bar{g}$ is related to $g$. The feedback connection with the gradient $\nabla f$ can be viewed as an oracle query. The assumptions on the structure of the dynamical system (as given by \eqref{eq:ode}) are without loss of generality.}
\label{Fig:Feedback}
\end{figure}

\subsection{Rescaling time}
In contrast to the discrete-time setting, the time $t$ has no absolute meaning in continuous time. The trajectory $\tilde{x}(t):=x(\alpha(t))$, where $\alpha: \mathbb{R}_{\geq 0} \rightarrow \mathbb{R}_{\geq 0}$ is any diffeomorphism satisfies a differential equation similar to \eqref{eq:ode}. 

For $k=2$, for example, the trajectory $\tilde{x}(t)$ evolves according to
\begin{equation}\label{eq:transform}
\ddot{\tilde{x}}=g(\dot{\tilde{x}}/\dot{\alpha}, \tilde{x}, \nabla f[h(\tilde{x},\dot{\tilde{x}}/\dot{\alpha})]) \dot{\alpha}^2 + \dot{\tilde{x}} \frac{\ddot{\alpha}}{\dot{\alpha}},
\end{equation}
where the dependence on time has been omitted.
%Thus, the resulting differential equation is explicitly dependent on time, unless $\dot{\alpha}$ is constant. 
In order for a discussion of convergence rates to be meaningful, such a rescaling needs to be avoided. This is done by adding the following requirement: $(g,h)\in \mathcal{G}'$, 
\begin{equation}\label{eq:timenorm}
\mathcal{G}'\!\!:=\!\!\Big\{(g,h)\in \mathcal{G} ~|~\text{det}\left(\!\left.\frac{\partial g(w,v)}{\partial v}\right|_{0,0}\! \left.\frac{\partial h}{\partial x}\right|_{0}\right) \! =\! (-1/L)^n \Big\},
\end{equation}
where the variables $w\in \mathbb{R}^{n\times k}$ and $v\in \mathbb{R}^n$ are placeholders, and $\text{det}$ denotes the determinant. In the example of $k=2$, the right-hand-side of \eqref{eq:transform} satisfies the time-normalization constraint \eqref{eq:timenorm} if and only if
\begin{equation*}
\left(\frac{-1}{L}\right)^n\!\!=\!\text{det}\left(\!\left.\frac{\partial g(w,v)}{\partial v}\right|_{0,0} \!\!\left.\frac{\partial h}{\partial x}\right|_{0}\!\right) \dot{\alpha}(t)^{2n}\!
=\!\!\left(\frac{-\dot{\alpha}(t)^{2}}{L}\right)^n\!\!,
\end{equation*}
where $(g,h)\in \mathcal{G}'$ has been used for the last equality. This implies $\dot{\alpha}(t)^2=1$ for all $t\in \mathbb{R}_{\geq 0}$, or equivalently, $\alpha(t)=t+\text{const}$. The same argument applies for $k=1$ or $k>2$ and implies that \eqref{eq:timenorm} fixes the time scale.

Choosing any other normalization in \eqref{eq:timenorm}, such as the trace, the induced two-norm, a specific entry, or a constant different than $1/L$, fixes the time scale in a different way; however, the results derived in the remainder of the paper still apply. The normalization according to \eqref{eq:timenorm} is convenient, since, as a result, a convergence rate of unity is achieved for the class $C_{L,L}$, which consists of functions that behave locally like $L |x|^2/2$. The sign is imposed by the asymptotic stability requirement (G3). Additional context on the normalization \eqref{eq:timenorm} is provided in App.~\ref{App:TimeNorm}.

\subsection{First-order approximation}
For deriving the lower bounds on the convergence rate it will be enough to consider initial conditions that are close to the origin. We therefore apply Taylor's theorem to the dynamics \eqref{eq:ode},
\begin{multline}
x^{(k)}=-\sum_{j=1}^{k-1} G_j x^{(j)}-G_0 \Delta f(0) \left( x+ \sum_{j=1}^{k-1} H_j x^{(j)} \right)\\
+r(x,\dot{x},\dots,x^{(k-1)}),\label{eq:taylor}
\end{multline}
where the dependence on time is omitted, and the matrices $G_0,\dots,G_{k-1} \in \mathbb{R}^{n\times n}$ and $H_1,\dots H_{k-1}\in \mathbb{R}^{n\times n}$ represent the different partial derivatives of $g$ and $h$ evaluated at the origin. The remainder term is denoted by the function $r: \mathbb{R}^{n\times k} \rightarrow \mathbb{R}^n$, and captures the second-order terms. In deriving \eqref{eq:taylor}, we exploited the fact that the partial derivative of $g$ with respect to $x$ vanishes, when evaluated at the origin, due to Assumption (G2). Furthermore, we set the partial derivative of $h$ with respect to $x$, evaluated at the origin, to the identity. This amounts to a normalization of the state $x$, which, according to Assumption (G2) can always be done and does not affect the convergence rate.

By introducing the state variable $z:=(x,\dot{x},\dots,x^{(k-1)})$, the dynamics \eqref{eq:taylor} can be rewritten as
\begin{equation}
\dot{z}(t)=A z(t)+ \tilde{r}(z(t)),\label{eq:lineq}
\end{equation}
where $A\in \mathbb{R}^{kn \times kn}$ is obtained by appropriately stacking the matrices $G_0,\dots, G_{k-1}$, $H_1,\dots,H_{k-1}$ and $\tilde{r}$ captures the remainder term. The following lemma relates the convergence rate of the nonlinear dynamics to the convergence rate of the linear dynamics.
\begin{lemma}\label{Lem:lin}
Assume that there exists a neighborhood $N$ of the origin such that any solution $z(t)$ of \eqref{eq:lineq} with $z(0)\in N$ satisfies 
\begin{equation*}
|z(t)|\leq c_1 |z(0)| \exp(-a_\text{r} t),\quad \forall t\in [0,\infty),
\end{equation*}
where $c_1\geq 1$ and $a_\text{r}>0$ are constants. Then, there exists a constant $c_2\geq 1$, such that any solution of the corresponding linear equation $\Delta \dot{z}(t)=A \Delta z(t)$ satisfies the estimate
\begin{equation*}
|\Delta z(t)| \leq c_2 |\Delta z(0)| \exp(-a_\text{r} t).
\end{equation*} 
\end{lemma}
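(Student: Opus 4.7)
The plan is to recover the linear flow $e^{At}$ as the $\epsilon \to 0^+$ limit of appropriately rescaled nonlinear trajectories, exploiting the fact that $\tilde r(z) = o(|z|)$ near the origin (it is the higher-order remainder in the Taylor expansion leading to \eqref{eq:lineq}). Fix a unit vector $\zeta_0 \in \mathbb{R}^{kn}$ and, for $\epsilon>0$ small enough that $\epsilon\zeta_0 \in N$, let $z$ denote the solution of \eqref{eq:lineq} starting at $\epsilon\zeta_0$. I would introduce the rescaled trajectory $\zeta_\epsilon(t) := z(t)/\epsilon$, which satisfies $\dot\zeta_\epsilon = A\zeta_\epsilon + \tilde r(\epsilon\zeta_\epsilon)/\epsilon$ and, by hypothesis, $|\zeta_\epsilon(t)| \leq c_1 \exp(-a_\mathrm{r} t)$.

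The key observation is that the nonlinear forcing $\tilde r(\epsilon\zeta_\epsilon)/\epsilon$ vanishes uniformly in $t$ as $\epsilon \to 0^+$: given any $\delta>0$, continuity of the first derivatives of $g$ and $h$ yields $|\tilde r(z)| \leq \delta |z|$ whenever $|z|$ is small enough, and the exponential bound on $\zeta_\epsilon$ keeps the entire trajectory inside this region once $\epsilon$ is sufficiently small. Variation of constants then gives
\begin{equation*}
\zeta_\epsilon(t) - e^{At}\zeta_0 = \int_0^t e^{A(t-s)}\,\tilde r(\epsilon\zeta_\epsilon(s))/\epsilon\, ds,
\end{equation*}
and for each fixed $t \geq 0$ the operator norm $\|e^{A(t-s)}\|$ is bounded on $s \in [0,t]$ by a finite constant $M_t$ that is independent of $\epsilon$. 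Combining these estimates yields $|\zeta_\epsilon(t) - e^{At}\zeta_0| \leq M_t \delta c_1 / a_\mathrm{r}$, which tends to zero as $\epsilon \to 0^+$.

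Passing to the limit $\epsilon \to 0^+$ in the inequality $|\zeta_\epsilon(t)| \leq c_1 \exp(-a_\mathrm{r} t)$ therefore gives $|e^{At}\zeta_0| \leq c_1 \exp(-a_\mathrm{r} t)$. Taking the supremum over unit $\zeta_0$ upgrades this to the operator-norm bound $\|e^{At}\| \leq c_1 \exp(-a_\mathrm{r} t)$ for every $t \geq 0$, and the conclusion of the lemma follows with $c_2 = c_1$ because $\Delta z(t) = e^{At}\Delta z(0)$. The main subtlety I anticipate is that $M_t$ may grow without bound in $t$, so one cannot expect uniform-in-$t$ convergence $\zeta_\epsilon(t) \to e^{At}\zeta_0$; however, since the target bound is required only pointwise in $t$, fixing $t$ first and then sending $\epsilon \to 0^+$ suffices. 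Everything else — the existence of the $C^1$ remainder estimate, continuity of $\tau \mapsto \|e^{A\tau}\|$ on compact intervals, and the identification of the operator norm with the supremum over unit vectors — is standard.
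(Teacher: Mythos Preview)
Your proof is correct and takes a genuinely different route from the paper's. The paper first invokes a converse Lyapunov argument to conclude that $A$ is Hurwitz, then writes the linear solution via a \emph{backward} variation-of-constants formula $\Delta z(t)=z(t)+\int_t^\infty e^{A(t-\tau)}\tilde r(z(\tau))\,d\tau$ (obtained by letting the forward formula run to $T\to\infty$ and using that both $z$ and $\Delta z$ vanish there), and bounds the tail integral using a quadratic estimate $|\tilde r(z)|\le c_4|z|^2$ together with a growth bound on $\|e^{A(t-\tau)}\|$ for $\tau>t$. Your scaling-limit argument is more elementary on several counts: it does not need the preliminary step that $A$ is Hurwitz, it only requires $\tilde r(z)=o(|z|)$ rather than a quadratic remainder, and it sidesteps the somewhat delicate estimate on $\|e^{A\sigma}\|$ for negative $\sigma$. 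As a bonus you recover the sharp constant $c_2=c_1$, whereas the paper's bound picks up an extra $|z(0)|$-dependent term. The paper's approach, on the other hand, yields an explicit pointwise comparison between the nonlinear and linear trajectories launched from the \emph{same} initial condition, which is a slightly stronger piece of information than the operator-norm bound you obtain.
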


The lemma is a standard result from the theory of ordinary differential equations. We included a proof in App.~\ref{App:Lemma}.
A lower bound on the convergence rate of the linearized dynamics provides us therefore with a lower bound on the convergence of the nonlinear dynamics, as this would otherwise contradict the statement of Lemma~\ref{Lem:lin}. In order to find lower bounds on the convergence rate, we will therefore replace \eqref{eq:ode} with its first-order approximation \eqref{eq:taylor}, where we neglect the remainder term $r$.

\subsection{Invariance under orthogonal transformations}\label{Sec:invar}
Assumption (C3) requires that the class $C_{\mu,L}$ is invariant under orthogonal transformations. We will show next that, without loss of generality, the linearized dynamics \eqref{eq:taylor} (when $r$ is neglected) can be assumed to be invariant under orthogonal transformations, which will simplify our derivations.

%The following lemma will be useful.
%\begin{lemma}\label{Lem:Orthogonality}
%The matrix $M\in \mathbb{R}^{n\times n}$ is invariant under orthogonal transformations, that is, $T M T\T = M$ for all $T\in O(n)$, if and only if there exists a scalar $m\in \mathbb{R}$, such that $M=mI$, where $I\in \mathbb{R}^{n\times n}$ is the identity.
%\end{lemma}
%\begin{proof}
%The set of orthogonal matrices includes the set of permutation matrices, and as a result, the constraint $TMT\T=M$ for all permutation matrices $T\in O(n)$ implies that $M$ is symmetric. Hence, there exists an orthogonal matrix $U$ that diagonalizes $M$, which readily implies that $M$ is diagonal. The only diagonal matrix that is invariant under permutations is the scaled identity matrix. \qed
%\end{proof}

\begin{proposition}\label{Prop:Inv}
Assume \eqref{eq:taylor} converges with rate $\rho$ (or faster) for all $f\in C_{\mu,L}$. Then, the rate $\rho$ can be achieved for $G_0=g_0 I,\dots, G_{k-1}=g_{k-1} I$, $H_1=h_1 I, \dots, H_{k-1}=h_{k-1} I$, where $g_0,\dots,g_{k-1}$ and $h_1,\dots,h_{k-1}$ are scalars, and $I\in \mathbb{R}^{n\times n}$ is the identity.
\end{proposition}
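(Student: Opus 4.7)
The plan is to use the orthogonal invariance of $C_{\mu,L}$ (Assumption~(C3)) to symmetrize the algorithm over the orthogonal group. First I would establish \emph{conjugation equivalence}: for every $T \in O(n)$, the algorithm obtained by setting $G_j \mapsto T G_j T^{\TT}$ and $H_j \mapsto T H_j T^{\TT}$ converges on $C_{\mu,L}$ at rate at least $\rho$. This follows by applying the coordinate change $y = T x$ to~\eqref{eq:taylor}: in the $y$-variables, the original algorithm acting on $f \circ T$ produces exactly the trajectory of the conjugated algorithm acting on $f$. Since $f \circ T \in C_{\mu,L}$ by (C3) and orthogonal coordinate changes preserve norms, the rate is preserved.

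Next, I would define candidate scalar matrices by averaging over the Haar measure $\mathrm{d}T$ on $O(n)$:
\begin{equation*}
g_j I := \int_{O(n)} T G_j T^{\TT}\, \mathrm{d}T, \qquad h_j I := \int_{O(n)} T H_j T^{\TT}\, \mathrm{d}T.
\end{equation*}
Both integrals are $O(n)$-invariant, and since the standard representation of $O(n)$ on $\mathbb{R}^n$ is irreducible, Schur's lemma forces them to be scalar multiples of the identity; the normalization~\eqref{eq:timenorm} can then be restored by a small rescaling of $g_0$.

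The main obstacle is the third step: verifying that the scalar algorithm $(g_j I, h_j I)$ still achieves rate $\rho$ on all of $C_{\mu,L}$. The subtlety is that the linearization $A(G_j, H_j, H)$ is \emph{nonlinear} in $(G_j, H_j)$ because of cross terms of the form $G_0 H H_j$, so that Haar-averaging the pair $(G_j, H_j)$ does not automatically average the $A$-matrix. My plan is to build a common Lyapunov function for the orbit $\{A_T = A(T G_j T^{\TT}, T H_j T^{\TT}, H)\}_{T \in O(n)}$ via an $O(n)$-equivariant average of a Lyapunov matrix $P_I$ for a single $A_I$; the resulting $\bar P$ is block-scalar by Schur's lemma. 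A careful integration of the resulting Lyapunov inequalities $A_T^{\TT} \bar P + \bar P A_T \leq -2\rho \bar P$ over $T$, combined with the observation that the scalar $A$-matrix block-diagonalises along the eigenspaces of $H$, would reduce the verification to a one-dimensional worst-case analysis over $\lambda \in [\mu, L]$ provided by~(C2), closing the argument.
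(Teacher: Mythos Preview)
Your symmetrization route is genuinely different from the paper's, but the third step has a real gap that the Lyapunov construction does not close.

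You correctly identify the obstacle: the map $(G_j,H_j)\mapsto A(G_j,H_j,H)$ is nonlinear because of the cross terms $G_0 H H_j$, so Haar-averaging the parameters does not Haar-average the $A$-matrix. The proposed fix does not resolve this. First, ``average a Lyapunov matrix $P_I$ for $A_I$ over the orbit'' yields a block-scalar $\bar P$, but there is no reason $\bar P$ is a Lyapunov function for any $A_T$: writing $\tilde T=\diag(T,\dots,T)$, one has $A_T(H)=\tilde T\,A_I(T^{\TT}HT)\,\tilde T^{\TT}$, so the Lyapunov matrix naturally attached to $A_T$ is $\tilde T\,P_I(T^{\TT}HT)\,\tilde T^{\TT}$, which depends on $T$ through a \emph{different} Hessian; averaging these gives no inequality for a fixed $A_T$. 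Second, even granting a common $\bar P$ satisfying $A_T^{\TT}\bar P+\bar P A_T\le -2\rho\bar P$ for all $T$, integrating over $T$ yields the inequality for $\int A_T\,dT$, which is \emph{not} the $A$-matrix of the scalar algorithm---the very nonlinearity you flagged reappears. The closing remark that block-diagonalisation ``reduces to a one-dimensional worst-case over $\lambda\in[\mu,L]$'' merely restates the target: one must still show that the specific scalars $g_j=\tfrac1n\tr G_j$, $h_j=\tfrac1n\tr H_j$ give rate $\rho$, and since the rate-$\rho$ parameter set is not convex, a convex (Haar) average of rate-$\rho$ algorithms need not be rate-$\rho$.

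The paper does not average at all. It restricts to $\Delta f(0)=\lambda I$, so the characteristic polynomial factors into $n$ scalar degree-$k$ polynomials with $\lambda$-dependent coefficients $\tilde g_j(\lambda)$; it then picks \emph{one} of these factors and uses Kharitonov's theorem on the interval family $\prod_j[\min_\lambda\tilde g_j,\max_\lambda\tilde g_j]$. The scalars $g_j,h_j$ are chosen so that the affine family $\lambda\mapsto g_j+h_j\lambda$ hits exactly these endpoints at $\lambda=\mu$ and $\lambda=L$, hence produces the same four Kharitonov polynomials; Kharitonov then gives rate $\rho$ for every eigenvalue of $\Delta f(0)$. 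This is a constructive interval-stability argument, not a symmetrization, and it produces different scalars than your Haar average.
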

\begin{proof}
The convergence rate of \eqref{eq:taylor} (with $r=0$) is determined by the roots of the characteristic polynomial
\begin{equation*}
\text{det}\left(s^kI+\sum_{j=1}^{k-1} G_j s^j + G_0 \Delta f(0) \left( I+ \sum_{j=1}^{k-1} H_j s^j\right)\right).
\end{equation*}
We set $\Delta f(0)=\lambda I$ and analyze the characteristic polynomial for different $\lambda \in [\mu,L]$. The polynomial can be factorized into $n$ factors, each having the form
\begin{equation}
s^k + \tilde{g}_{k-1}(\lambda) s^{k-1} + \dots + \tilde{g}_1(\lambda) s + \lambda \tilde{g}_0, \label{eq:fundPoly}
\end{equation}
where the coefficients $\tilde{g}_j(\lambda)$ continuously depend on $\lambda$ and $\tilde{g}_0$ is given by an eigenvalue of $G_0$. The latter follows from evaluating the above determinant at $s=0$. By assumption, each of these factors must have roots with real parts smaller than $-\rho$ for all $\lambda\in [\mu,L]$. This implies, by Kharitonov's theorem \cite{Kharitonov}, that the four Kharitonov polynomials, which are given by different combinations of the maxima and minima of $\tilde{g}_j(\lambda)$ over $\lambda\in [\mu,L]$, have real parts smaller than $-\rho$. We pick any of the $n$ factors, \eqref{eq:fundPoly}, and choose $g_0,\dots,g_{k-1}$ and $h_1,\dots,h_{k-1}$ such that $g_0=\tilde{g}_0$,
\begin{equation*}
g_j+\mu h_j=\hspace{-5pt}\min_{\lambda \in [\mu,L]} \tilde{g}_j(\lambda),~~ g_j+L h_j=\hspace{-5pt}\max_{\lambda \in [\mu,L]} \tilde{g}_j(\lambda),
\end{equation*}
$j=1,\dots,k-1$.
In that case, for any $f\in C_{\mu,L}$, the characteristic polynomial factorizes into $n$ equal factors, each having the form
\begin{equation}
s^k+(g_{k-1} + h_{k-1} \bar{\lambda}) s^{k-1} +\dots + \bar{\lambda} g_0, \label{eq:fundPoly2}
\end{equation}
where $\bar{\lambda}\in [\mu,L]$ is an eigenvalue of $\Delta f(0)$. By construction, the Kharitonov polynomials of \eqref{eq:fundPoly} and \eqref{eq:fundPoly2} agree, which guarantees a convergence rate of at least $\rho$. \qed
\end{proof}

The fact that a square matrix is invariant under orthogonal transformations if and only if it is a scaled identity matrix implies that the dynamics \eqref{eq:taylor} are invariant under orthogonal transformations of $f$ if and only if the matrices $G_{0},\dots, H_{k-1}$ are scaled identity matrices. A game-theoretic interpretation of Prop.~\ref{Prop:Inv} is included in App.~\ref{App:InterInv}.

We therefore conclude that for any $(g,h)\in \mathcal{G}'$, a lower bound on the convergence rate is obtained by the first-order approximation of \eqref{eq:ode}, which, due to the time-normalization constraint and the invariance under orthogonal transformations, takes the form
\begin{equation}
x^{(k)} = -\sum_{j=1}^{k-1} g_j x^{(j)} -\frac{1}{L} \Delta f(0) \Big( x + \sum_{j=1}^{k-1} h_j x^{(j)} \Big), \label{eq:first-orderNorm}
\end{equation}
where $g_1,\dots,g_{k-1}$, $h_1,\dots,h_{k-1}\in \mathbb{R}$ are scalars and the dependence on time has been omitted. The time-normalization constraint \eqref{eq:timenorm} implies $g_0=1/L$.

\section{Lower Bounds on the Convergence Rate}\label{Sec:LBsec}
\subsection{Gradient flow ($k=1$)}\label{Sec:GradientFlow}
This section analyses the case $k=1$, resulting in gradient-flow algorithms. According to \eqref{eq:first-orderNorm}, we obtain the first-order approximation
\begin{equation}
\dot{x}(t)= -\frac{1}{L}  \Delta f(0) x(t).\label{eq:odegrad}
\end{equation}
By choosing an appropriate coordinate system that diagonalizes $\Delta f(0)$, we conclude that each component $x_j$ of $x$ converges according to 
\begin{equation}
x_j(t) = x_j(0) \exp(-\lambda_f t),
\end{equation}
where $\lambda_f$ is an eigenvalue of $\Delta f(0)/L$. Due to the fact that $f\in C_{\mu,L}$, which implies $1/\kappa \leq \lambda_f \leq 1$, according to (C2), $x(t)$ converges in the worst-case with $c \exp(-t/\kappa)$, where $\kappa:=L/\mu$ and $c$ is constant. Hence, by virtue of Lemma~\ref{Lem:lin} we conclude:

\emph{The convergence rate of any continuous-time optimization algorithm $(g,h)\in \mathcal{G}'$ (with $k=1$) is upper bounded by $1/\kappa$ on functions $C_{\mu,L}$. Thus, the time required to achieve an $\epsilon$-distance to the optimizer is lower bounded by $\kappa~\text{ln}(c/\epsilon)$ on functions $C_{\mu,L}$, where $c>0$ is constant.}

\subsection{Accelerated gradient flow ($k=2$)}\label{Sec:AccGradientFlow}
This section discusses the case $k=2$. According to \eqref{eq:first-orderNorm}, the following first-order approximation is obtained:
\begin{equation}
\ddot{x}(t)=- \left( g_1 + \frac{h_1}{L} \Delta f(0)\right) \dot{x}(t) - \frac{1}{L} \Delta f(0) x(t).
\end{equation}
Choosing an appropriate coordinate system that diagonalizes $\Delta f(0)$, results in the scalar second-order differential equation for each component $x_j$ of $x$,
\begin{equation}
\ddot{x}_j(t)=-(g_1+h_1 \lambda_f)\dot{x}_j(t) - \lambda_f x_j(t),
\end{equation}
where $\lambda_f$ is an eigenvalue of $\Delta f(0)/L$. The convergence rate of a single component of $x$ is therefore dictated by the real part of the roots of the following polynomial:
\begin{equation}
s^2+(g_1+h_1 \lambda_f) s + \lambda_f =0.
\end{equation}
The roots are given by
\begin{equation}
-d_{\lambda_f}(g_1,h_1) \pm \sqrt{d_{\lambda_f}(g_1,h_1)^2 - \lambda_f}, \label{eq:root2}
\end{equation}
with $d_{\lambda_f}(g_1,h_1):=(g_1+\lambda_f h_1)/2$. Due to the fact that the square-root term is either positive or imaginary, the convergence rate is limited by the first root (with the $+$ sign). In addition, $f\in C_{\mu,L}$, hence $\lambda_f$ may vary between $1/\kappa$ and $1$. As a result, the convergence rate is lower bounded by
\begin{equation}
\min_{g_1,h_1} \max_{\lambda_f \in [\frac{1}{\kappa},1]} \text{Re}\left(-d_{\lambda_f} + \sqrt{d_{\lambda_f}^2 - \lambda_f}\right), \label{eq:minmax}
\end{equation}
where the dependence of $d_{\lambda_f}$ on $g_1$ and $h_1$ has been omitted. For a fixed $\lambda_f>0$ the function achieves its minimum value $-\sqrt{\lambda_f}$ when $g_1$ and $h_1$ are chosen such that $d_{\lambda_f}(g_1,h_1)=\sqrt{\lambda_f}$. Thus, interchanging the min and the max concludes that \eqref{eq:minmax} is lower bounded by $-1/\sqrt{\kappa}$. Choosing $g_1=2/\sqrt{\kappa}$, $h_1=0$, reveals that the lower bound is actually attained.\footnote{In that case, the real part of \eqref{eq:root2} evaluates to $-1/\sqrt{\kappa}$, whereas changing $\lambda_f\in [1/\kappa,1]$ only affects the imaginary part.} This implies
\begin{equation*}
\min_{g_1,h_1} \max_{\lambda_f \in [\frac{1}{\kappa},1]} \text{Re}\left(-d_{\lambda_f} + \sqrt{d_{\lambda_f}^2 - \lambda_f}\right)=-1/\sqrt{\kappa},
\end{equation*}
which, by virtue of Lemma~\ref{Lem:lin}, implies:

\emph{The convergence rate of any continuous-time optimization algorithm $(g,h)\in \mathcal{G}'$ (with $k=2$) is upper bounded by $1/\sqrt{\kappa}$ on functions $C_{\mu,L}$. Thus, the time required to achieve an $\epsilon$-distance to the optimizer is lower bounded by $\sqrt{\kappa}~\text{ln}(c/\epsilon)$ on functions $C_{\mu,L}$, where $c>0$ is constant.}

\subsection{Higher-order methods ($k>2$)}
We follow the reasoning of the previous sections and obtain the characteristic polynomial
\begin{multline}
s^{k}+ (g_{k-1}+h_{k-1} \lambda_f) s^{k-1} + \dots \\
+ (g_1+h_1 \lambda_f)s + \lambda_f=0, \label{eq:charPoly}
\end{multline}
whose roots determine the convergence rate of a single component of $x(t)$, where $x(t)$ satisfies \eqref{eq:first-orderNorm} and $\lambda_f \in [1/\kappa,1]$ is an eigenvalue of $\Delta f(0)/L$. 
Expressing \eqref{eq:charPoly} in terms of its roots $-\pi_1,\dots,-\pi_k\in \mathbb{C}$, where $\mathbb{C}$ denotes the set of complex numbers, results in
\begin{equation}
(s+\pi_1)(s+\pi_2)\dots (s+\pi_k)=0. \label{eq:factor}
\end{equation}
The minus sign is introduced for notational convenience. Equating the coefficients of \eqref{eq:charPoly} and \eqref{eq:factor} yields
\begin{align}
\lambda_f=\pi_1 \pi_2 \dots \pi_k. \label{eq:polecons}
\end{align}
In other words, no matter how the coefficient $g_j$ and $h_j$ are chosen, the product of the roots of \eqref{eq:charPoly} is always equal to $\lambda_f$. 
Due to the fact that the coefficients of the polynomial \eqref{eq:charPoly} are real, the roots $\pi_1,\dots,\pi_k$ are complex conjugated. As a consequence, the previous equation simplifies to
\begin{equation}
\lambda_f=|\pi_1|~|\pi_2| \dots |\pi_k| \geq |\pi_\text{min}|^k, \label{eq:eqsat}
\end{equation}
where $\pi_\text{min}$ denotes the the root with the smallest absolute value. Evaluating \eqref{eq:eqsat} for $\lambda_f=1/\kappa$ therefore yields the upper bound $1/\kappa^{1/k}$ on the absolute value of the smallest root, which suggests that the convergence rate is limited by $1/\kappa^{1/k}$. We will show next that such a convergence rate can, in fact, be achieved.

\begin{proposition}\label{Prop:LBk}
The convergence rate of any continuous-time optimization algorithm $(g,h)\in \mathcal{G}'$ with $k\geq 1$ is upper bounded by $1/\kappa^{1/k}$ on functions $C_{\mu,L}$. The algorithm given by \eqref{eq:first-orderNorm} with 
\begin{equation}\label{eq:fastAlg}
h_j=\kappa^{j/k} {k-1 \choose j}, \quad g_j=\frac{1}{\kappa^{(k-j)/k}} {k \choose j}-\frac{h_j}{\kappa},
\end{equation}
locally achieves the upper bound.
\end{proposition}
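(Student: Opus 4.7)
My plan is to establish the upper bound by a Vieta-type argument extending the discussion preceding the proposition, and then to verify achievability by explicitly factoring the characteristic polynomial associated with \eqref{eq:fastAlg}.

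For the upper bound I would pick the worst-case curvature $\lambda_f = 1/\kappa$, which is admissible by assumption (C2), and write the characteristic polynomial \eqref{eq:charPoly} in factored form $\prod_{j=1}^{k}(s+\pi_j)$. Equating constant terms gives $\prod_j \pi_j = \lambda_f = 1/\kappa$, and passing to moduli yields $|\pi_\text{min}|^k \leq 1/\kappa$, so $\text{Re}(\pi_\text{min}) \leq |\pi_\text{min}| \leq \kappa^{-1/k}$. The linearized convergence rate $\min_j \text{Re}(\pi_j)$ therefore cannot exceed $\kappa^{-1/k}$, and Lemma~\ref{Lem:lin} lifts this bound to the nonlinear dynamics \eqref{eq:ode}.

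For achievability, I would set $\alpha := \kappa^{-1/k}$, so that $\alpha^k = 1/\kappa$, and aim to show that with the coefficients in \eqref{eq:fastAlg} the characteristic polynomial factors as
\begin{equation*}
s^k + \sum_{j=1}^{k-1} (g_j + h_j \lambda_f) s^j + \lambda_f = (s+\alpha)^{k-1}\bigl(s + \lambda_f \alpha^{1-k}\bigr).
\end{equation*}
Substituting \eqref{eq:fastAlg} and simplifying yields $g_j + h_j \lambda_f = \binom{k-1}{j-1}\alpha^{k-j} + \binom{k-1}{j}\lambda_f \alpha^{-j}$, where I invoke Pascal's identity $\binom{k}{j} = \binom{k-1}{j}+\binom{k-1}{j-1}$; expanding the proposed right-hand side via the binomial theorem produces exactly these coefficients, so the factorization holds for every $\lambda_f$. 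The roots are therefore $-\alpha$ with multiplicity $k-1$ and $-\lambda_f \kappa^{(k-1)/k}$; on $\lambda_f \in [1/\kappa,1]$ the second root has real part at least $\alpha$, with equality at $\lambda_f = 1/\kappa$ (where it collapses onto the repeated root), so the worst-case rate equals $\kappa^{-1/k}$.

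I expect the only nontrivial step to be the combinatorial bookkeeping in verifying the factorization identity. One should also note that the choice \eqref{eq:fastAlg} defines a valid element of $\mathcal{G}'$: (G1) is immediate since $g$ and $h$ are linear, (G2) and (G3) follow from the computed root locations together with the standard linearization principle, and the time-normalization constraint \eqref{eq:timenorm} is built in because the polynomial's constant term is exactly $\lambda_f$, which forces $g_0 = 1/L$ in \eqref{eq:first-orderNorm}.
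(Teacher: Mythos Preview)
Your proposal is correct and follows essentially the same approach as the paper. The only cosmetic difference is in verifying the factorization of the characteristic polynomial: the paper first rewrites it as $(s+1/\kappa^{1/k})^k + \bar{\lambda}_f (s\kappa^{1/k}+1)^{k-1}$ with $\bar{\lambda}_f=\lambda_f-1/\kappa$ and then pulls out the common factor $(s+1/\kappa^{1/k})^{k-1}$, whereas you go directly to the final factored form $(s+\alpha)^{k-1}(s+\lambda_f\alpha^{1-k})$ and verify it by matching coefficients via Pascal's identity; both routes yield the same root locations and the same conclusion.
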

\begin{proof}
The previous discussion implies that $1/\kappa^{1/k}$ is indeed a lower bound. 
In case $g_j$ and $h_j$ are chosen according to \eqref{eq:fastAlg}, the characteristic polynomial \eqref{eq:charPoly} simplifies to 
\begin{equation*}
(s+1/\kappa^{1/k})^k
+ \bar{\lambda}_f (s \kappa^{1/k} + 1)^{k-1}=0, 
\end{equation*}
where $\bar{\lambda}_f:=\lambda_f-1/\kappa$. This can be further factorized to 
\begin{equation*}
(s+1/\kappa^{1/k})^{k-1} (s+1/\kappa^{1/k} + \bar{\lambda}_f \kappa^{(k-1)/k})=0,
\end{equation*}
which shows that there are $k-1$ real roots at $-1/\kappa^{1/k}$ and one real root at
\begin{equation*}
-\frac{1}{\kappa^{1/k}} - \bar{\lambda}_f \kappa^{(k-1)/k}
\end{equation*}
that depends on $\bar{\lambda}_f$. For $\bar{\lambda}_f\in [0,1-1/\kappa]$, this root takes values in $[-\kappa^{1-1/k},-1/\kappa^{1/k}]$. \qed
\end{proof}
The example above locally achieves a convergence rate of $1/\kappa^{1/k}$, but introduces a single real root that tends to $-\infty$ for large $\kappa$, when $\lambda_f>1/\kappa$. Such a fast root poses a problem for any explicit discretization scheme \citep{Nevanlinna}. We show that the convergence rate of any continuous-time optimization algorithm cannot exceed $\mathcal{O}(1/\sqrt{\kappa})$ when all roots of \eqref{eq:charPoly} are required to remain bounded for all $\kappa$. 

\begin{proposition}\label{Prop:BP}
The convergence rate of any continuous-time optimization algorithm $(g,h)\in \mathcal{G}'$ ($k\geq 2$), whose characteristic polynomial \eqref{eq:charPoly} has bounded roots, cannot exceed $\mathcal{O}(1/\sqrt{\kappa})$.
\end{proposition}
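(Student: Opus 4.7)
The plan is to show that if the roots of the characteristic polynomial \eqref{eq:charPoly} remain bounded uniformly in $\kappa$ and $\lambda_f \in [1/\kappa, 1]$, then the convergence rate $\rho$ cannot exceed $\mathcal{O}(1/\sqrt{\kappa})$. The argument decomposes into three steps, the last of which I expect to be the most delicate.

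First, I would observe that bounded roots imply bounded coefficients. If $|\pi_i(\lambda_f)| \leq M$ for every $\lambda_f \in [1/\kappa, 1]$, then Vieta's formulas bound the coefficient $g_j + h_j \lambda_f$ of $s^j$ in \eqref{eq:charPoly} by $\binom{k}{j} M^{k-j}$. Evaluating at $\lambda_f = 1$ and $\lambda_f = 1/\kappa$ and subtracting then shows that both $|g_j|$ and $|h_j|$ are uniformly bounded, independently of $\kappa$.

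Second, I would apply Newton's identity to the reciprocal of \eqref{eq:charPoly}. Evaluated at $\lambda_f = 1/\kappa$, this gives
\begin{equation*}
\sum_{i<j} \frac{1}{\pi_i \pi_j} = \kappa g_2 + h_2, \qquad \sum_i \frac{1}{\pi_i} = \kappa g_1 + h_1.
\end{equation*}
Combining $|\mathrm{Re}(\pi_i)| \geq \rho$ with $|\pi_i| \leq M$ yields $|\pi_i| \geq \rho$, and the triangle inequality then produces $|\kappa g_2 + h_2| \leq \binom{k}{2}/\rho^2$ and $|\kappa g_1 + h_1| \leq k/\rho$. If $|g_2|$ is bounded below by a positive constant independent of $\kappa$, the first inequality directly yields $\rho = \mathcal{O}(1/\sqrt{\kappa})$; if $|g_1|$ is bounded below, the second gives the even stronger $\rho = \mathcal{O}(1/\kappa)$.

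Third, I must rule out the remaining regime in which both $g_1(\kappa)$ and $g_2(\kappa)$ tend to zero as $\kappa \to \infty$. In that case the limiting polynomial $p(s, 0)$ has a root at the origin of multiplicity at least three, and a standard perturbation argument shows that the roots emerging for small $\lambda_f > 0$ approximate the $(m+1)$-th roots of $-\lambda_f/g_{m+1}$ with $m+1 \geq 3$, at least one of which has strictly positive real part. A Rouch\'e-type continuity argument then produces a right-half-plane root of \eqref{eq:charPoly} at $\lambda_f = 1/\kappa$ for all sufficiently large $\kappa$, contradicting asymptotic stability (G3). The main obstacle is exactly this last step: because the $g_j(\kappa)$ depend on $\kappa$ and may tend to zero at different rates, one must argue that the unstable directions identified in the formal limit persist at $\lambda_f = 1/\kappa$ uniformly in large $\kappa$, rather than disappearing into the perturbation.
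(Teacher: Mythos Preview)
Your steps 1 and 2 are correct and give a clean algebraic route to the intermediate conclusion that, if $\rho\sqrt{\kappa}\to\infty$, then necessarily $g_1(\kappa),g_2(\kappa)\to 0$; equivalently, at $\lambda_f=1/\kappa$ the coefficients $a_0,a_1,a_2$ of \eqref{eq:charPoly} all tend to zero, so at least three of its roots tend to the origin. The paper reaches the same intermediate fact by a direct product-of-roots count. Where the two proofs diverge is your step~3.

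The paper does not attempt a perturbation argument at the single value $\lambda_f=1/\kappa$. Instead it rewrites the $\lambda_f$-family as a feedback interconnection with gain $\bar\lambda_f\in[0,1-1/\kappa]$ and open-loop transfer function $P_\text{T}$, and shows via the Nyquist criterion that the three roots tending to zero force the phase of $P_\text{T}(i\omega)$ past $-180^\circ$ at some small frequency $\omega_\text{c}$. This produces a value of $\bar\lambda_f$ in the \emph{interior} of the interval at which the characteristic polynomial acquires a root on the imaginary axis, contradicting stability. The essential feature is that the argument sweeps over all admissible $\lambda_f$.

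Your step~3 works only at the endpoint $\lambda_f=1/\kappa$, and the perturbation you sketch does not close there. At the natural scale $s=\kappa^{-1/3}t$ (taking the generic case $m=2$, $a_3^*\neq 0$), the rescaled equation is $a_3^*t^3 + a_2\kappa^{1/3}t^2 + a_1\kappa^{2/3}t + 1 \approx 0$, and your bounds give only $|a_2\kappa^{1/3}|\le C/(\kappa^{2/3}\rho^2)$ and $|a_1\kappa^{2/3}|\le C/(\kappa^{1/3}\rho)$. Under the hypothesis $\rho\gg\kappa^{-1/2}$ these coefficients need not vanish (they would if $\rho\gg\kappa^{-1/3}$, which is strictly stronger). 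So the three small roots are governed by a genuine cubic with bounded but not small lower-order terms, and nothing forces one of them into the open right half-plane. The obstacle you flag is therefore real; closing it seems to require either a tool that tracks the roots as $\lambda_f$ varies over the whole interval---which is exactly what the Nyquist argument provides---or a sharper Hurwitz-type inequality than the $a_1a_2>a_0a_3$ condition that falls out of your identities.
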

\begin{proof}
We consider the case where the roots are constrained to the unit disk (unit bound); the same arguments also apply for a bound greater than one. For the subsequent analysis it will be beneficial to rescale $g_j$ and $\lambda_f$ by introducing $\bar{g}_j:=g_j+h_j/\kappa$, $\bar{\lambda}_f=\lambda_f-1/\kappa$ such that \eqref{eq:charPoly} takes the form
\begin{multline}
s^k+\bar{g}_{k-1} s^{k-1} + \dots + \bar{g}_1 s + 1/\kappa \\
+\bar{\lambda}_f \left(h_{k-1} s^{k-1} + \dots + h_1 s + 1\right) =0, \label{eq:charPoly2}
\end{multline}
with $\bar{\lambda}_f \in [0,1-1/\kappa]$. 
%We assume that the smallest root $\pi_\text{min}$ of \eqref{eq:charPoly2} has magnitude larger than $1/\sqrt{\kappa}$, and show that this leads to a contradiction.
In order to study the dependence of the roots of \eqref{eq:charPoly2} on $\bar{\lambda}_f$ we use the Nyquist criterion (see App.~\ref{App:Nyquist}), which provides a necessary and sufficient condition for \eqref{eq:charPoly2} to have all roots in the left-half complex plane for all $\bar{\lambda}_f \in [0,1-1/\kappa]$. The Nyquist criterion implies that if the roots of \eqref{eq:charPoly2} are all in the left-half complex plane, there is no $\bar{\lambda}_f\in [0,1-1/\kappa]$, such that the graph of the complex function $P: \mathbb{R} \rightarrow \mathbb{C}$,
\begin{equation}
P(\omega):=\left.\bar{\lambda}_{f} \frac{h_{k-1} s^{k-1} + \dots + h_1 s + 1}{s^{k} + \bar{g}_{k-1} s^{k-1} + \dots + \bar{g}_1 s + 1/\kappa}\right|_{s=i \omega}, \label{eq:transferFunction}
\end{equation}
passes through the point $-1$, where $i:=\sqrt{-1}$ denotes the imaginary unit. We will show that this condition cannot be fulfilled when the roots $\pi_j$ of \eqref{eq:charPoly2} are required to satisfy $|\pi_j|\in (1/\sqrt{\kappa},1]
$ for all $\kappa\geq 1$. To that end, the function $P(\omega)$ is rewritten as
\begin{equation}
\frac{\bar{\lambda}_f}{1-1/\kappa} \Big(\underbrace{\frac{(s+\bar{\pi}_1)\dots (s+\bar{\pi}_k)}{(s+\underline{\pi}_1) \dots (s+\underline{\pi}_k)}}_{:=H(s)} -1\Big)\Big\rvert_{s=i\omega}, \label{eq:P2}
\end{equation}
where $\underline{\pi}_j$ are the roots of \eqref{eq:charPoly2} for $\bar{\lambda}_f=0$ and $\bar{\pi}_j$ are the roots of \eqref{eq:charPoly2} for $\bar{\lambda}_f=1-1/\kappa$. These therefore satisfy (cf.\ \eqref{eq:eqsat})
\begin{equation}
\frac{1}{\kappa}=|\underline{\pi}_1|\dots |\underline{\pi}_k|, \quad 1=|\bar{\pi}_1|\dots |\bar{\pi}_k|.
\end{equation}
Combined with the requirement $|\pi_j|\leq 1$, the latter implies $|\bar{\pi}_j|=1$ for $j=1,2,\dots,k$, whereas the former implies that at least three roots, denoted by $\underline{\pi}_1,\underline{\pi}_2,\underline{\pi}_3$ tend to zero for $\kappa \rightarrow \infty$, since $|\underline{\pi}_j|>\mathcal{O}(1/\sqrt{\kappa})$. Next, we analyze the graph of $P(\omega)$, and start by considering the numerator and denominator of $H(i\omega)$ separately. The numerator of $H(i \omega)$ takes the value $1$ for $\omega=0$, is continuous in $\omega$, and its graph spirals outwards for $\omega\geq 0$ (since $\text{Re}(\bar{\pi}_j)>0$, the phase strictly increases for $\omega\geq 0$ until it reaches $90\cdot k$ degrees). All roots $\bar{\pi}_j$ satisfy $|\bar{\pi}_j|=1$. Hence, there exists a value $\omega_{\text{num}}>0$ such that the numerator of $H(i\omega)$ has a phase below, say, 10 degrees for all $\omega \in [0,\omega_{\text{num}}]$, and all $\kappa\geq 1$. The denominator of $H(i \omega)$ is likewise continuous in $\omega$, takes the value $1/\kappa$ for $\omega=0$, and spirals outwards for $\omega\geq 0$. However, since at least three of the roots $\underline{\pi}_j$ tend to zero for $\kappa \rightarrow \infty$, the denominator approaches (for large $\kappa$)
\begin{equation*}
s^3 (s+\underline{\pi}_4) \dots (s+\underline{\pi}_k)|_{s=i\omega},
\end{equation*}
which has a phase of more than $270$ degrees for small $\omega$ (due to the $s^3$ term). Thus, there exists a small value $0<\omega_{\text{denum}}\leq \omega_{\text{num}}$, such that for sufficiently large $\kappa$, the denominator of $H(i\omega)$ reaches a phase of more than $190$ degrees for $\omega \in [0,\omega_{\text{denum}}]$. The situation is illustrated in Fig.~\ref{Fig:illus}. The phase of $H(i\omega)$ is given by the difference between the phase of the numerator and the denominator, and therefore, for sufficiently large $\kappa$, there exists the value $\omega_\text{c}\in [0,\omega_{\text{denum}}]$, such that $H(i\omega_\text{c})$ has a phase of $-180$ degrees. Hence, according to \eqref{eq:P2}, $P(i \omega_\text{c})$ has likewise a phase of $-180$ degrees and $P(i \omega_\text{c}) <-1$. Thus, there exists a $\bar{\lambda}_f$ such that $P(i\omega_{\text{c}})=-1$, contradicting the condition established by the Nyquist criterion. \qed
\end{proof}

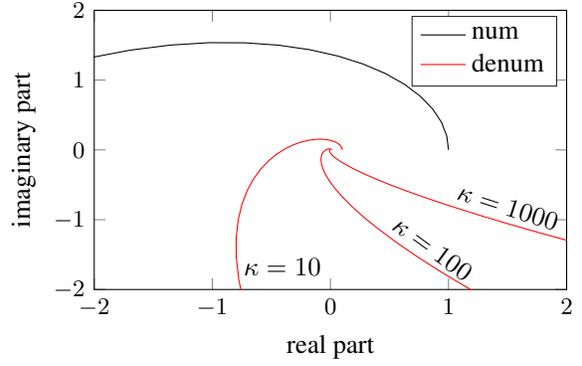
\begin{figure}
\newlength\figurewidth
\newlength\figureheight
\setlength{\figurewidth}{.8\columnwidth}
\setlength{\figureheight}{0.45\columnwidth}

\center
% This file was created by matlab2tikz.
%
%The latest updates can be retrieved from
%  http://www.mathworks.com/matlabcentral/fileexchange/22022-matlab2tikz-matlab2tikz
%where you can also make suggestions and rate matlab2tikz.
%
\begin{tikzpicture}

\begin{axis}[%
width=0.951\figurewidth,
height=\figureheight,
at={(0\figurewidth,0\figureheight)},
scale only axis,
xmin=-2,
xmax=2,
xlabel style={font=\color{white!15!black}},
xlabel={real part},
xlabel near ticks,
ymin=-2,
ymax=2,
ylabel style={font=\color{white!15!black}},
ylabel={imaginary part},
ylabel near ticks,
axis background/.style={fill=white},
legend style={legend cell align=left, align=left, draw=white!15!black}
]
\addplot [color=black]
  table[row sep=crcr]{%
1	0\\
0.98500625	0.1995\\
0.9401	0.396\\
0.86550625	0.5865\\
0.7616	0.768\\
0.62890625	0.9375\\
0.4681	1.092\\
0.28000625	1.2285\\
0.0655999999999998	1.344\\
-0.17399375	1.4355\\
-0.4375	1.5\\
-0.72349375	1.5345\\
-1.0304	1.536\\
-1.35649375	1.5015\\
-1.6999	1.428\\
-2.05859375	1.3125\\
-2.4304	1.152\\
-2.81299375	0.9435\\
-3.2039	0.684\\
-3.60049375	0.3705\\
-4	0\\
-4.39949375	-0.4305\\
-4.7959	-0.924000000000001\\
-5.18599375	-1.4835\\
-5.5664	-2.112\\
-5.93359375	-2.8125\\
-6.2839	-3.588\\
-6.61349375	-4.4415\\
-6.9184	-5.376\\
-7.19449375	-6.3945\\
-7.4375	-7.5\\
};
\addlegendentry{num}

\addplot [color=red]
  table[row sep=crcr]{%
0.1	0\\
0.0952628335097474	0.0352844175381833\\
0.0811263340389897	0.0688818111007955\\
0.0578155015877269	0.0991051567122657\\
0.0257053361559589	0.124267430397023\\
-0.0146791622563142	0.142681608179495\\
-0.0626629936490925	0.152660666084113\\
-0.117421158022376	0.152517580135304\\
-0.177978655376164	0.140565326357498\\
-0.243210485710458	0.115116880775124\\
-0.311841649025257	0.07448521941261\\
-0.382447145320561	0.0169833182943857\\
-0.45345197459637	-0.0590758465551202\\
-0.523131136852684	-0.155379299111479\\
-0.589609632089503	-0.273614063350261\\
-0.650862460306828	-0.415467163247037\\
-0.704714621504658	-0.582625622777379\\
-0.748841115682992	-0.776776465916859\\
-0.780766942841832	-0.999606716641045\\
-0.797867102981177	-1.25280339892551\\
-0.797366596101028	-1.53805353674583\\
-0.776340422201383	-1.85704415407756\\
-0.731713581282243	-2.21146227489629\\
-0.660261073343609	-2.60299492317758\\
-0.55860789838548	-3.03332912289701\\
-0.423229056407856	-3.50415189803014\\
-0.250449547410736	-4.01715027255255\\
-0.0364443713941218	-4.5740112704398\\
0.222761471641985	-5.17642191566747\\
0.531292981697589	-5.82606923221114\\
0.893425158772688	-6.52464024404636\\
};
\addlegendentry{denum}

\addplot [color=red, forget plot]
  table[row sep=crcr]{%
0.01	0\\
0.00850625	0.00616644143732834\\
0.0041	0.0113841995766062\\
-0.00299375	0.014704591119783\\
-0.0124	0.0151789327688082\\
-0.02359375	0.0118585412256314\\
-0.0359	0.00379473319220205\\
-0.04849375	-0.0099611746295304\\
-0.0604	-0.0303578655376165\\
-0.07049375	-0.0583440228301066\\
-0.0775	-0.0948683298050514\\
-0.07999375	-0.140879469760501\\
-0.0764	-0.197326125994507\\
-0.06499375	-0.265156981805119\\
-0.0439	-0.345320720490387\\
-0.01109375	-0.438766025348363\\
0.0355999999999999	-0.546441579677096\\
0.09850625	-0.669296066774637\\
0.1801	-0.808278169939037\\
0.28300625	-0.964336572468347\\
0.41	-1.13841995766062\\
0.56400625	-1.3314770088139\\
0.7481	-1.54445640922624\\
0.96550625	-1.77830684219569\\
1.2196	-2.0339769910203\\
1.51390625	-2.31241553899813\\
1.8521	-2.61457116942722\\
2.23800625	-2.94139256560562\\
2.6756	-3.29382841083138\\
3.16900625	-3.67282738840256\\
3.7225	-4.07933818161721\\
};
\addplot [color=red, forget plot]
  table[row sep=crcr]{%
0.001	0\\
0.000531908350974743	0.00103576867987875\\
-0.000797366596101028	0.00153805353674583\\
-0.00276282484122731	0.000973370747589547\\
-0.00498946638440411	-0.00119176351060176\\
-0.00695229122563142	-0.00549083306083978\\
-0.00797629936490925	-0.0124573217261362\\
-0.00723649080223758	-0.0226247133295026\\
-0.00375786553761643	-0.0365264916939508\\
0.0035845764289542	-0.0546961406424925\\
0.0160658350974743	-0.0776671439981392\\
0.035110910467944	-0.105972985583903\\
0.0622948025403631	-0.140147149222795\\
0.0993425113147316	-0.180723118737827\\
0.14812903679105	-0.22823437795201\\
0.210679378969317	-0.283214410688358\\
0.289168537849534	-0.34619670076988\\
0.385921513431701	-0.417714732019589\\
0.503413305715817	-0.498301988260497\\
0.644268914701882	-0.588491953315615\\
0.811263340389897	-0.688818111007955\\
1.00732158277986	-0.799813945160529\\
1.23551864187178	-0.922012939596347\\
1.49907951766564	-1.05594857813842\\
1.80137921016145	-1.20215434460977\\
2.14594271935921	-1.36116372283339\\
2.53644504525893	-1.53351019663231\\
2.97671118786059	-1.71972724982953\\
3.4707161471642	-1.92034836624806\\
4.02258492316976	-2.13590702971092\\
4.63659251587727	-2.36693672404112\\
};
\end{axis}
\node at (2.5,0.3) (tmp1){$\kappa=10$};
\node[rotate=-36] at (4.5,0.55) (tmp1){$\kappa=100$};
\node[rotate=-18] at (5.5,1.1) (tmp1){$\kappa=1000$};
\end{tikzpicture}%
\caption{The graph illustrates the behavior of the numerator and denumerator of $H(i \omega)$ for $\omega \geq 0$ on the example $(i\omega+1)^4/((i\omega+1/\kappa^{0.25})^4$. Both numerator and denumerator spiral outwards and their phase approaches $360$ degrees for large $\omega$. While the numerator (black) has a phase close to $0$ for small $\omega$, the phase of the denumerator (red) approaches $360$ degrees for large $\kappa$.}
\label{Fig:illus}
\end{figure}

\vspace{-11pt}\subsection{Discussion}
The analysis provides insights into the convergence limits of any gradient-based algorithm. It emphasizes the fact that these convergence limits result from limited curvature information, since by Assumption (C2), only upper and lower bounds on $\Delta f(0)$ are known (i.e. $\lambda_f\in [1/\kappa,1]$). Compared to the discrete-time case, where the convergence rate is upper bounded by $\mathcal{O}(1/\sqrt{\kappa})$, faster convergence rates can be achieved with continuous-time algorithms. These algorithms, however, necessarily include arbitrarily fast converging dynamics as shown by Prop.~\ref{Prop:BP} and cannot be discretized by explicit linear methods \citep{Nevanlinna}. This  recovers the classical discrete-time result. Even though the worst-case complexity in discrete-time cannot be improved, discretizing the dynamics \eqref{eq:fastAlg} with variable step-size schemes, or integration methods whose order increases with $\kappa$ might still lead to new discrete-time algorithms that achieve fast convergence rates. Additional background on the relation to discrete time is included in App.~\ref{App:BackgroundDiscretization}.
%We included additional background information on discretization schemes in App.~\ref{App:Discretization}.

%with a constant step-size scheme of fixed order. However, constant step-size schemes, whose order increases with $\kappa$, can be constructed that match the continuous-time rate \citep[see, e.g.,][Ch.~IV.2]{HairerStiff}. Similarly, variable step-size methods might achieve rates faster than $\mathcal{O}(1/\sqrt{\kappa})$. We conjecture that this represents the continuous-time interpretation of the fact that, in finite dimensions, algorithms such as conjugate gradients or the cutting-plane methods exceed the dimension-independent lower bound $\mathcal{O}(1/\sqrt{\kappa})$ \cite{Polyak,NesterovBook}.

The proof of Prop.~\ref{Prop:BP} provides a graphical interpretation of the fact that a convergence rate of $\mathcal{O}(1/\sqrt{\kappa})$ cannot be improved (under the assumption of bounded rates): A faster convergence rate requires that at least three roots of \eqref{eq:charPoly} become arbitrarily small for large $\kappa$. These three roots introduce a negative phase shift of more than 180 degrees (for $\kappa\rightarrow \infty$ each introduces $-90$ degrees) in the function $P(i\omega)$, cf.\ \eqref{eq:transferFunction}, which leads to instability for large $\kappa$. Clearly, these issues do not occur for low-order dynamics with $k\leq 2$, as in that case, the phase shift is limited from the outset to $-180$ degrees.

One might suspect that the algorithm provided in Prop.~\ref{Prop:BP} is fragile, in the sense that it only achieves the rate of $1/\kappa^{1/k}$ on quadratic functions. The following proposition shows that this is not the case; the algorithm converges and achieves the rate $1/\kappa^{1/k}$ for all smooth and strongly convex functions and even for certain nonconvex functions.
\begin{proposition}\label{Prop:Conv}
Let $f\in C_{\mu,L}$ be such that
\begin{equation}
(\nabla f(x)/L-\alpha_\text{s} x)\T (\nabla f(x)/L-x) \leq 0, \label{eq:sector}
\end{equation}
holds for all $x\in \mathbb{R}^n$, where $\alpha_\text{s}>0$ is constant. Then, the origin is a globally asymptotically stable equilibrium for the dynamical system
\begin{multline}
x^{(k)}(t)=-g_{k-1} x^{(k-1)}(t) - \dots - g_1 \dot{x}(t) \\
- \nabla f(x(t) + h_{1} \dot{x}(t) +\dots + h_{k-1} x^{(k-1)}(t))/L, \label{eq:Lure}
\end{multline}
where $g_j$ and $h_j$ are defined in Prop.~\ref{Prop:LBk}. The trajectories converge with rate $1/\kappa^{1/k}$ if \eqref{eq:sector} holds for $\alpha_\text{s}=1/\kappa$.
\end{proposition}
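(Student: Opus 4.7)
The plan is to interpret \eqref{eq:Lure} as a Lure-type absolute-stability problem---the linear plant from Prop.~\ref{Prop:LBk} in feedback with the sector-bounded nonlinearity $\varphi(y):=\nabla f(y)/L$---and invoke the circle criterion. First, I would introduce the state $z:=(x\T,\dot x\T,\dots,(x^{(k-1)})\T)\T$ and rewrite \eqref{eq:Lure} as $\dot z = A z - B\,\varphi(Cz)$, where $A$ is the block-companion matrix built from $g_1 I,\dots,g_{k-1}I$, where $B$ injects the nonlinear feedback into the last block, and where $C=[I,\,h_1I,\,\dots,\,h_{k-1}I]$. Because every block is a scalar multiple of $I_n$, the MIMO loop decouples into $n$ identical scalar Lure loops with SISO transfer function
\[
G(s)=\frac{h_{k-1}s^{k-1}+\dots+h_1 s+1}{s^k+g_{k-1}s^{k-1}+\dots+g_1 s},
\]
which is exactly the object that appeared in the Nyquist analysis of Prop.~\ref{Prop:BP}.

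Next, I would rewrite \eqref{eq:sector} in the Kalman form $(\varphi(y)-\alpha_\text{s}y)\T(\varphi(y)-y)\leq 0$, asserting that $\varphi$ lives in the conic sector $[\alpha_\text{s}I,I]$. The circle criterion guarantees global asymptotic stability of this Lure loop provided the Nyquist locus $\{G(i\omega)\}_{\omega\in\mathbb{R}}$ stays outside the closed disk whose real-axis diameter runs from $-1/\alpha_\text{s}$ to $-1$. This Nyquist/disk condition is, via Kalman's standard equivalence, the same as requiring that the closed-loop poles of $s^k+\sum_j(g_j+\lambda h_j)s^j+\lambda=0$ stay strictly in the open left half-plane for every constant gain $\lambda\in[\alpha_\text{s},1]$. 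Proposition~\ref{Prop:LBk} already verified exactly this for $\lambda\in[1/\kappa,1]$ (all closed-loop roots have real part at most $-1/\kappa^{1/k}$), and the same factorization shows the condition persists for any $\alpha_\text{s}\geq 1/\kappa$. Hence the circle criterion applies and the origin is globally asymptotically stable.

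For the rate claim with $\alpha_\text{s}=1/\kappa$, I would use the standard exponential-shift trick: set $\tilde z(t):=e^{\rho t}z(t)$ with $\rho=1/\kappa^{1/k}$ and the associated shifted nonlinearity $\tilde\varphi(t,\tilde y):=e^{\rho t}\varphi(e^{-\rho t}\tilde y)$. The shifted linear part has transfer function $G(s-\rho)$, and the explicit factorization exhibited in the proof of Prop.~\ref{Prop:LBk} shows that its closed-loop roots for gain $\lambda$ sit at $0$ (with multiplicity $k-1$) and at $-(\lambda-1/\kappa)\kappa^{(k-1)/k}$, which lie in the closed left half-plane for every $\lambda\in[1/\kappa,1]$. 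Because the sector condition for $\varphi$ transfers to a sector condition for $\tilde\varphi$, a quadratic Lyapunov function produced by the KYP lemma (or an equivalent Popov-multiplier argument) yields $|\tilde z(t)|=O(1)$, i.e.\ $|z(t)|=O(e^{-\rho t})$.

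The main obstacle is precisely this last step: since the rate $1/\kappa^{1/k}$ is tight, the shifted linear system is only marginally stable ($k-1$ roots on the imaginary axis), so the circle criterion in its strict form does not apply to $G(s-\rho)$ verbatim. I would handle this by adding an arbitrarily small artificial damping $\epsilon>0$, applying the strict circle criterion to the $\epsilon$-perturbed loop, and passing to the limit $\epsilon\to 0$, exploiting the fact that the sector is closed and that the marginal roots correspond to the pure exponential decay $e^{-\rho t}$ isolated in Prop.~\ref{Prop:LBk}. The remaining pieces---state-space rewriting, reduction to the scalar loop via Prop.~\ref{Prop:Inv}, and matching the characteristic polynomial---are largely bookkeeping.
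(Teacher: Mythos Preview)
Your Lur'e-plus-circle-criterion framework and the exponential shift for the rate claim both match the paper's approach. The gap is in how you verify the circle-criterion hypothesis. You assert that the Nyquist/disk condition is, ``via Kalman's standard equivalence,'' the same as Hurwitz stability of $s^k+\sum_j(g_j+\lambda h_j)s^j+\lambda$ for every constant gain $\lambda\in[\alpha_\text{s},1]$. No such equivalence exists: the implication ``stable for all constant gains in the sector $\Rightarrow$ absolutely stable'' is precisely Aizerman's conjecture, which is false, and the circle criterion's disk condition on the Nyquist locus is strictly stronger than constant-gain Hurwitz stability. So importing the root locations from Prop.~\ref{Prop:LBk} does not establish that the Nyquist plot of $G$ avoids the critical disk, and the circle criterion has not actually been checked.

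What you are missing is the paper's decisive simplification: with the specific $g_j,h_j$ from Prop.~\ref{Prop:LBk}, numerator and denominator of the transfer function share the factor $(s+\kappa^{-1/k})^{k-1}$ and cancel to a \emph{first-order} plant. In your loop splitting (nonlinearity $\varphi=\nabla f/L$ in the sector $[\alpha_\text{s},1]$) this yields $G(s)=\kappa^{(k-1)/k}/s$; the paper instead absorbs $y/\kappa$ into the linear part, putting the nonlinearity $\nabla f/L-\cdot/\kappa$ in the sector $[\alpha_\text{s}-1/\kappa,\,1-1/\kappa]$ and obtaining the open-loop-stable $P_\text{T}(s)=\kappa^{(k-1)/k}/(s+\kappa^{-1/k})$. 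Either way the Nyquist locus lies in the closed right half-plane (the imaginary axis for $G$, the circle through $0$ and $\kappa$ for $P_\text{T}$), and it is then immediate---by geometry, not via Prop.~\ref{Prop:LBk}---that it avoids the critical disk for every $\alpha_\text{s}>0$; the same computation with $s\mapsto s-\kappa^{-1/k}$ gives the rate. The cancellation makes the realization non-minimal, so one needs KYP for merely stabilizable/detectable realizations, a point the paper flags explicitly.
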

The result can be proved by rewriting the dynamics as a Lur\'e problem with $\nabla f(x)/L-x/\kappa$ as the nonlinear feedback term and applying the circle criterion \citep[][p.~265]{Khalil}. A full proof can be found in App.~\ref{App:Prop3p3}. The assumption \eqref{eq:sector} ensures that $\nabla f(x)/L$ belongs to the sector $[\alpha_\text{s},1]$, as illustrated on a scalar example in Fig.~\ref{Fig:Res} (top left). The condition $\alpha_\text{s}=1/\kappa$ is fulfilled by smooth and strongly convex functions, which shows that the lower bound suggested by Prop.~\ref{Prop:LBk} is achieved. However, \eqref{eq:sector} does not require $f$ to be convex.

\begin{figure*}[h]
\setlength{\figurewidth}{.8\columnwidth}
\setlength{\figureheight}{0.3\columnwidth}
\begin{minipage}{\columnwidth}
\hspace*{.2cm}% This file was created by matlab2tikz.
%
%The latest updates can be retrieved from
%  http://www.mathworks.com/matlabcentral/fileexchange/22022-matlab2tikz-matlab2tikz
%where you can also make suggestions and rate matlab2tikz.
%
\begin{tikzpicture}

\begin{axis}[%
width=0.951\figurewidth,
height=\figureheight,
at={(0\figurewidth,0\figureheight)},
scale only axis,
xmin=-4.5,
xmax=4.5,
xlabel style={font=\color{white!15!black}},
xlabel={$x$},
xlabel near ticks,
ymin=-4,
ymax=5,
ylabel style={font=\color{white!15!black}},
ylabel={$f$, $\nabla f$},
ylabel near ticks,
axis background/.style={fill=white},
legend style={legend cell align=left, align=left, draw=white!15!black},
legend pos=south east
]
\addplot [color=black]
  table[row sep=crcr]{%
-4.5	-0.794173220395152\\
-4.4	-0.771973903286178\\
-4.3	-0.749774586177204\\
-4.2	-0.72757526906823\\
-4.1	-0.705375951959256\\
-4	-0.683176634850282\\
-3.9	-0.660977317741308\\
-3.8	-0.638778000632334\\
-3.7	-0.616578683523361\\
-3.6	-0.594379366414386\\
-3.5	-0.572180049305413\\
-3.4	-0.630836218237527\\
-3.3	-0.689492387169642\\
-3.2	-0.748148556101757\\
-3.1	-0.806804725033872\\
-3	-0.865460893965986\\
-2.9	-0.924117062898101\\
-2.8	-0.982773231830216\\
-2.7	-1.04142940076233\\
-2.6	-1.10008556969445\\
-2.5	-1.15874173862656\\
-2.4	-1.07501955703898\\
-2.3	-0.991297375451391\\
-2.2	-0.907575193863807\\
-2.1	-0.823853012276222\\
-2	-0.740130830688638\\
-1.9	-0.656408649101054\\
-1.8	-0.572686467513469\\
-1.7	-0.488964285925885\\
-1.6	-0.405242104338301\\
-1.5	-0.321519922750717\\
-1.4	-0.323837684991751\\
-1.3	-0.326155447232785\\
-1.2	-0.328473209473819\\
-1.1	-0.330790971714853\\
-1	-0.333108733955887\\
-0.9	-0.335426496196922\\
-0.8	-0.337744258437956\\
-0.7	-0.34006202067899\\
-0.6	-0.342379782920024\\
-0.5	-0.344697545161058\\
-0.399999999999999	-0.275758036128846\\
-0.3	-0.206818527096635\\
-0.2	-0.137879018064423\\
-0.0999999999999996	-0.0689395090322114\\
0	0\\
0.0999999999999996	0.0689395090322114\\
0.2	0.137879018064423\\
0.3	0.206818527096635\\
0.399999999999999	0.275758036128846\\
0.5	0.344697545161058\\
0.6	0.397879116457121\\
0.7	0.451060687753184\\
0.8	0.504242259049248\\
0.9	0.557423830345311\\
1	0.610605401641374\\
1.1	0.663786972937437\\
1.2	0.7169685442335\\
1.3	0.770150115529563\\
1.4	0.823331686825626\\
1.5	0.876513258121689\\
1.6	0.880196855696278\\
1.7	0.883880453270867\\
1.8	0.887564050845455\\
1.9	0.891247648420044\\
2	0.894931245994633\\
2.1	0.898614843569221\\
2.2	0.90229844114381\\
2.3	0.905982038718399\\
2.4	0.909665636292987\\
2.5	0.913349233867576\\
2.6	0.87270933418282\\
2.7	0.832069434498065\\
2.8	0.791429534813309\\
2.9	0.750789635128554\\
3	0.710149735443798\\
3.1	0.669509835759042\\
3.2	0.628869936074287\\
3.3	0.588230036389531\\
3.4	0.547590136704776\\
3.5	0.50695023702002\\
3.6	0.594023467637758\\
3.7	0.681096698255495\\
3.8	0.768169928873233\\
3.9	0.85524315949097\\
4	0.942316390108708\\
4.1	1.02938962072645\\
4.2	1.11646285134418\\
4.3	1.20353608196192\\
4.4	1.29060931257966\\
4.5	1.3776825431974\\
};
\addlegendentry{$\nabla f$}

\addplot [color=red]
  table[row sep=crcr]{%
-4.5	2.70805147975106\\
-4.4	2.62974412356699\\
-4.3	2.55365669909382\\
-4.2	2.47978920633155\\
-4.1	2.40814164528018\\
-4	2.3387140159397\\
-3.9	2.27150631831012\\
-3.8	2.20651855239144\\
-3.7	2.14375071818365\\
-3.6	2.08320281568677\\
-3.5	2.02487484490078\\
-3.4	1.96472403152363\\
-3.3	1.89870760125327\\
-3.2	1.8268255540897\\
-3.1	1.74907789003292\\
-3	1.66546460908293\\
-2.9	1.57598571123972\\
-2.8	1.48064119650331\\
-2.7	1.37943106487368\\
-2.6	1.27235531635084\\
-2.5	1.15941395093479\\
-2.4	1.04772588615151\\
-2.3	0.944410039526995\\
-2.2	0.849466411061235\\
-2.1	0.762895000754233\\
-2	0.684695808605991\\
-1.9	0.614868834616506\\
-1.8	0.55341407878578\\
-1.7	0.500331541113812\\
-1.6	0.455621221600603\\
-1.5	0.419283120246152\\
-1.4	0.387015239859029\\
-1.3	0.354515583247802\\
-1.2	0.321784150412472\\
-1.1	0.288820941353038\\
-1	0.255625956069501\\
-0.9	0.222199194561861\\
-0.8	0.188540656830117\\
-0.7	0.154650342874269\\
-0.6	0.120528252694319\\
-0.5	0.0861743862902646\\
-0.399999999999999	0.0551516072257692\\
-0.3	0.0310227790644952\\
-0.2	0.0137879018064424\\
-0.0999999999999996	0.00344697545161056\\
0	0\\
0.0999999999999996	0.00344697545161056\\
0.2	0.0137879018064424\\
0.3	0.0310227790644952\\
0.399999999999999	0.0551516072257692\\
0.5	0.0861743862902646\\
0.6	0.123303219371173\\
0.7	0.165750209581689\\
0.8	0.21351535692181\\
0.9	0.266598661391538\\
1	0.325000122990873\\
1.1	0.388719741719813\\
1.2	0.45775751757836\\
1.3	0.532113450566513\\
1.4	0.611787540684273\\
1.5	0.696779787931638\\
1.6	0.784615293622536\\
1.7	0.872819159070894\\
1.8	0.96139138427671\\
1.9	1.05033196923999\\
2	1.13964091396072\\
2.1	1.22931821843891\\
2.2	1.31936388267456\\
2.3	1.40977790666767\\
2.4	1.50056029041824\\
2.5	1.59171103392627\\
2.6	1.68101396232879\\
2.7	1.76625290076284\\
2.8	1.8474278492284\\
2.9	1.9245388077255\\
3	1.99758577625411\\
3.1	2.06656875481426\\
3.2	2.13148774340592\\
3.3	2.19234274202911\\
3.4	2.24913375068383\\
3.5	2.30186076937007\\
3.6	2.35690945460296\\
3.7	2.42066546289762\\
3.8	2.49312879425406\\
3.9	2.57429944867227\\
4	2.66417742615225\\
4.1	2.76276272669401\\
4.2	2.87005535029754\\
4.3	2.98605529696284\\
4.4	3.11076256668992\\
4.5	3.24417715947878\\
};
\addlegendentry{$f$}

\addplot [color=black, dashed, forget plot]
  table[row sep=crcr]{%
-4.5	-4.5\\
-4.4	-4.4\\
-4.3	-4.3\\
-4.2	-4.2\\
-4.1	-4.1\\
-4	-4\\
-3.9	-3.9\\
-3.8	-3.8\\
-3.7	-3.7\\
-3.6	-3.6\\
-3.5	-3.5\\
-3.4	-3.4\\
-3.3	-3.3\\
-3.2	-3.2\\
-3.1	-3.1\\
-3	-3\\
-2.9	-2.9\\
-2.8	-2.8\\
-2.7	-2.7\\
-2.6	-2.6\\
-2.5	-2.5\\
-2.4	-2.4\\
-2.3	-2.3\\
-2.2	-2.2\\
-2.1	-2.1\\
-2	-2\\
-1.9	-1.9\\
-1.8	-1.8\\
-1.7	-1.7\\
-1.6	-1.6\\
-1.5	-1.5\\
-1.4	-1.4\\
-1.3	-1.3\\
-1.2	-1.2\\
-1.1	-1.1\\
-1	-1\\
-0.9	-0.9\\
-0.8	-0.8\\
-0.7	-0.7\\
-0.6	-0.6\\
-0.5	-0.5\\
-0.399999999999999	-0.399999999999999\\
-0.3	-0.3\\
-0.2	-0.2\\
-0.0999999999999996	-0.0999999999999996\\
0	0\\
0.0999999999999996	0.0999999999999996\\
0.2	0.2\\
0.3	0.3\\
0.399999999999999	0.399999999999999\\
0.5	0.5\\
0.6	0.6\\
0.7	0.7\\
0.8	0.8\\
0.9	0.9\\
1	1\\
1.1	1.1\\
1.2	1.2\\
1.3	1.3\\
1.4	1.4\\
1.5	1.5\\
1.6	1.6\\
1.7	1.7\\
1.8	1.8\\
1.9	1.9\\
2	2\\
2.1	2.1\\
2.2	2.2\\
2.3	2.3\\
2.4	2.4\\
2.5	2.5\\
2.6	2.6\\
2.7	2.7\\
2.8	2.8\\
2.9	2.9\\
3	3\\
3.1	3.1\\
3.2	3.2\\
3.3	3.3\\
3.4	3.4\\
3.5	3.5\\
3.6	3.6\\
3.7	3.7\\
3.8	3.8\\
3.9	3.9\\
4	4\\
4.1	4.1\\
4.2	4.2\\
4.3	4.3\\
4.4	4.4\\
4.5	4.5\\
};
\addplot [color=black, dashed, forget plot]
  table[row sep=crcr]{%
-4.5	-0.9\\
-4.4	-0.88\\
-4.3	-0.86\\
-4.2	-0.84\\
-4.1	-0.82\\
-4	-0.8\\
-3.9	-0.78\\
-3.8	-0.76\\
-3.7	-0.74\\
-3.6	-0.72\\
-3.5	-0.7\\
-3.4	-0.68\\
-3.3	-0.66\\
-3.2	-0.64\\
-3.1	-0.62\\
-3	-0.6\\
-2.9	-0.58\\
-2.8	-0.56\\
-2.7	-0.54\\
-2.6	-0.52\\
-2.5	-0.5\\
-2.4	-0.48\\
-2.3	-0.46\\
-2.2	-0.44\\
-2.1	-0.42\\
-2	-0.4\\
-1.9	-0.38\\
-1.8	-0.36\\
-1.7	-0.34\\
-1.6	-0.32\\
-1.5	-0.3\\
-1.4	-0.28\\
-1.3	-0.26\\
-1.2	-0.24\\
-1.1	-0.22\\
-1	-0.2\\
-0.9	-0.18\\
-0.8	-0.16\\
-0.7	-0.14\\
-0.6	-0.12\\
-0.5	-0.1\\
-0.399999999999999	-0.0799999999999999\\
-0.3	-0.06\\
-0.2	-0.04\\
-0.0999999999999996	-0.0199999999999999\\
0	0\\
0.0999999999999996	0.0199999999999999\\
0.2	0.04\\
0.3	0.06\\
0.399999999999999	0.0799999999999999\\
0.5	0.1\\
0.6	0.12\\
0.7	0.14\\
0.8	0.16\\
0.9	0.18\\
1	0.2\\
1.1	0.22\\
1.2	0.24\\
1.3	0.26\\
1.4	0.28\\
1.5	0.3\\
1.6	0.32\\
1.7	0.34\\
1.8	0.36\\
1.9	0.38\\
2	0.4\\
2.1	0.42\\
2.2	0.44\\
2.3	0.46\\
2.4	0.48\\
2.5	0.5\\
2.6	0.52\\
2.7	0.54\\
2.8	0.56\\
2.9	0.58\\
3	0.6\\
3.1	0.62\\
3.2	0.64\\
3.3	0.66\\
3.4	0.68\\
3.5	0.7\\
3.6	0.72\\
3.7	0.74\\
3.8	0.76\\
3.9	0.78\\
4	0.8\\
4.1	0.82\\
4.2	0.84\\
4.3	0.86\\
4.4	0.88\\
4.5	0.9\\
};
\end{axis}
\end{tikzpicture}%
\end{minipage}\hfill
\begin{minipage}{\columnwidth}
\hspace*{-.35cm}\input{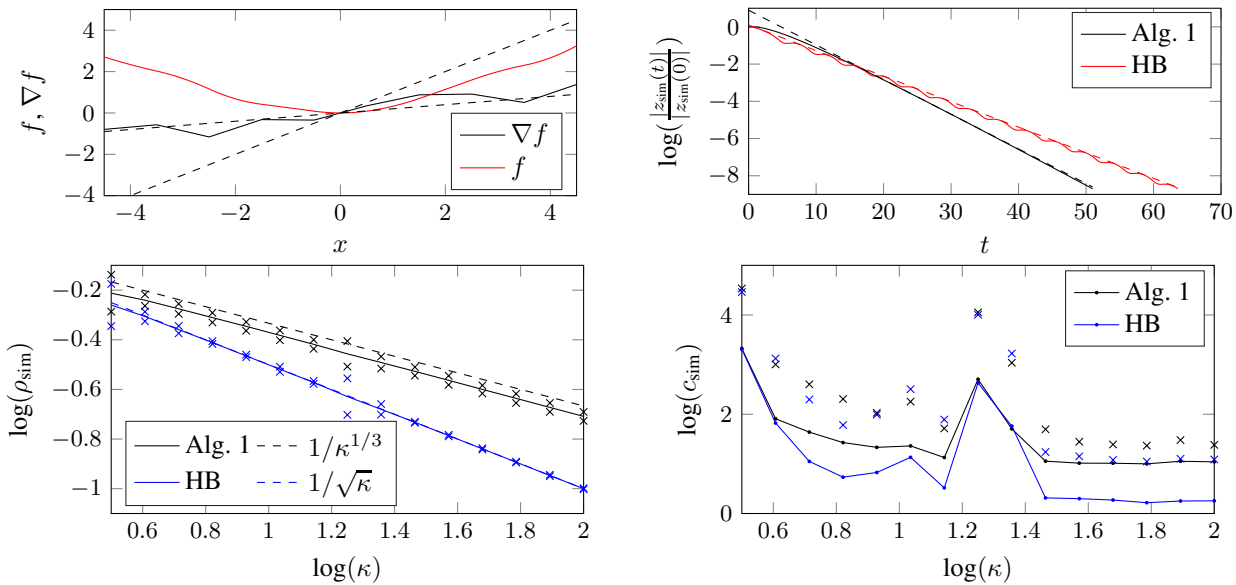}
\end{minipage}
\setlength{\figureheight}{0.4\columnwidth}
\begin{minipage}{\columnwidth}
% This file was created by matlab2tikz.
%
%The latest updates can be retrieved from
%  http://www.mathworks.com/matlabcentral/fileexchange/22022-matlab2tikz-matlab2tikz
%where you can also make suggestions and rate matlab2tikz.
%
\begin{tikzpicture}

\begin{axis}[%
width=0.951\figurewidth,
height=\figureheight,
at={(0\figurewidth,0\figureheight)},
scale only axis,
xmin=0.5,
xmax=2,
xlabel style={font=\color{white!15!black}},
xlabel={$\text{log}(\kappa)$},
ymin=-1.1,
ymax=-0.1,
ylabel style={font=\color{white!15!black}},
ylabel={$\text{log}(\rho_\text{sim})$},
xlabel near ticks,
ylabel near ticks,
axis background/.style={fill=white},
legend columns=2,
legend pos=south west,
legend style={legend cell align=left, align=left, draw=white!15!black}
]
\addplot [color=black]
  table[row sep=crcr]{%
0.5	-0.212241436972616\\
0.607142857142857	-0.240418674951509\\
0.714285714285714	-0.275264146522866\\
0.821428571428571	-0.310658137585194\\
0.928571428571429	-0.345824123501038\\
1.03571428571429	-0.38211571046629\\
1.14285714285714	-0.418238917778858\\
1.25	-0.456704900791315\\
1.35714285714286	-0.491611485361829\\
1.46428571428571	-0.527173002973896\\
1.57142857142857	-0.563352368811975\\
1.67857142857143	-0.599967154829205\\
1.78571428571429	-0.63603300964803\\
1.89285714285714	-0.672442027007023\\
2	-0.708707895140601\\
};
\addlegendentry{Alg.~1}

\addplot [color=black, dashed]
  table[row sep=crcr]{%
0.5	-0.166666666666667\\
0.607142857142857	-0.202380952380952\\
0.714285714285714	-0.238095238095238\\
0.821428571428571	-0.273809523809524\\
0.928571428571429	-0.30952380952381\\
1.03571428571429	-0.345238095238095\\
1.14285714285714	-0.380952380952381\\
1.25	-0.416666666666667\\
1.35714285714286	-0.452380952380952\\
1.46428571428571	-0.488095238095238\\
1.57142857142857	-0.523809523809524\\
1.67857142857143	-0.55952380952381\\
1.78571428571429	-0.595238095238095\\
1.89285714285714	-0.630952380952381\\
2	-0.666666666666667\\
};
\addlegendentry{$1/\kappa^{1/3}$}

\addplot [color=black, draw=none, mark=x, mark options={solid, black},forget plot]
  table[row sep=crcr]{%
0.5	-0.138010401925522\\
0.607142857142857	-0.217480680577045\\
0.714285714285714	-0.255100722220959\\
0.821428571428571	-0.29118004871588\\
0.928571428571429	-0.327993795668347\\
1.03571428571429	-0.362478880126117\\
1.14285714285714	-0.399913295952754\\
1.25	-0.405551514606925\\
1.35714285714286	-0.467330125779494\\
1.46428571428571	-0.509853187097502\\
1.57142857142857	-0.544903270437811\\
1.67857142857143	-0.583813073135915\\
1.78571428571429	-0.617482356919999\\
1.89285714285714	-0.654675227745856\\
2	-0.690600161021968\\
};
%\addlegendentry{Prop.~3.1}

\addplot [color=black, draw=none, mark=x, mark options={solid, black},forget plot]
  table[row sep=crcr]{%
0.5	-0.286472472019711\\
0.607142857142857	-0.263356669325974\\
0.714285714285714	-0.295427570824774\\
0.821428571428571	-0.330136226454507\\
0.928571428571429	-0.363654451333729\\
1.03571428571429	-0.401752540806462\\
1.14285714285714	-0.436564539604961\\
1.25	-0.507858286975704\\
1.35714285714286	-0.515892844944165\\
1.46428571428571	-0.54449281885029\\
1.57142857142857	-0.581801467186139\\
1.67857142857143	-0.616121236522495\\
1.78571428571429	-0.654583662376062\\
1.89285714285714	-0.690208826268191\\
2	-0.726815629259234\\
};
%\addlegendentry{Prop.~3.1}

\addplot [color=blue]
  table[row sep=crcr]{%
0.5	-0.25989313119509\\
0.607142857142857	-0.306081992291375\\
0.714285714285714	-0.35903903410971\\
0.821428571428571	-0.411171568930104\\
0.928571428571429	-0.464937043158625\\
1.03571428571429	-0.518675923857177\\
1.14285714285714	-0.571954049041203\\
1.25	-0.628945414559299\\
1.35714285714286	-0.680217400385309\\
1.46428571428571	-0.732414855430452\\
1.57142857142857	-0.785991966655694\\
1.67857142857143	-0.839594337325817\\
1.78571428571429	-0.892935856542317\\
1.89285714285714	-0.94676564146844\\
2	-1.00001566877871\\
};
\addlegendentry{HB}

\addplot [color=blue, draw=none, mark=x, mark options={solid, blue},forget plot]
  table[row sep=crcr]{%
0.5	-0.174949322283577\\
0.607142857142857	-0.286759522017637\\
0.714285714285714	-0.344054528608558\\
0.821428571428571	-0.405462213931511\\
0.928571428571429	-0.459968082841918\\
1.03571428571429	-0.508280122447192\\
1.14285714285714	-0.566019974312172\\
1.25	-0.555356183777473\\
1.35714285714286	-0.659086438972832\\
1.46428571428571	-0.730421877545664\\
1.57142857142857	-0.782742129674502\\
1.67857142857143	-0.836658407597569\\
1.78571428571429	-0.891282070301466\\
1.89285714285714	-0.943712738424125\\
2	-0.997917233417625\\
};
%\addlegendentry{HB}

\addplot [color=blue, draw=none, mark=x, mark options={solid, blue},forget plot]
  table[row sep=crcr]{%
0.5	-0.344836940106604\\
0.607142857142857	-0.325404462565114\\
0.714285714285714	-0.374023539610861\\
0.821428571428571	-0.416880923928697\\
0.928571428571429	-0.469906003475332\\
1.03571428571429	-0.529071725267163\\
1.14285714285714	-0.577888123770235\\
1.25	-0.702534645341125\\
1.35714285714286	-0.701348361797785\\
1.46428571428571	-0.73440783331524\\
1.57142857142857	-0.789241803636887\\
1.67857142857143	-0.842530267054065\\
1.78571428571429	-0.894589642783169\\
1.89285714285714	-0.949818544512756\\
2	-1.00211410413979\\
};
%\addlegendentry{HB}

\addplot [color=blue, dashed]
  table[row sep=crcr]{%
0.5	-0.25\\
0.607142857142857	-0.303571428571429\\
0.714285714285714	-0.357142857142857\\
0.821428571428571	-0.410714285714286\\
0.928571428571429	-0.464285714285714\\
1.03571428571429	-0.517857142857143\\
1.14285714285714	-0.571428571428571\\
1.25	-0.625\\
1.35714285714286	-0.678571428571429\\
1.46428571428571	-0.732142857142857\\
1.57142857142857	-0.785714285714286\\
1.67857142857143	-0.839285714285714\\
1.78571428571429	-0.892857142857143\\
1.89285714285714	-0.946428571428571\\
2	-1\\
};
\addlegendentry{$1/\sqrt{\kappa}$}

\end{axis}
\end{tikzpicture}%
\end{minipage}\hfill
\begin{minipage}{\columnwidth}
% This file was created by matlab2tikz.
%
%The latest updates can be retrieved from
%  http://www.mathworks.com/matlabcentral/fileexchange/22022-matlab2tikz-matlab2tikz
%where you can also make suggestions and rate matlab2tikz.
%
\begin{tikzpicture}

\begin{axis}[%
width=0.951\figurewidth,
height=\figureheight,
at={(0\figurewidth,0\figureheight)},
scale only axis,
xmin=0.5,
xmax=2,
xlabel style={font=\color{white!15!black}},
xlabel={$\text{log}(\kappa)$},
ymin=0,
ymax=5,
ylabel style={font=\color{white!15!black}},
ylabel={$\text{log}(c_\text{sim})$},
xlabel near ticks,
ylabel near ticks,
axis background/.style={fill=white},
legend style={legend cell align=left, align=left, draw=white!15!black}
]
\addplot [color=black,mark=*,mark size=.5pt]
  table[row sep=crcr]{%
0.5	3.32551077746658\\
0.607142857142857	1.90861327097059\\
0.714285714285714	1.63966683781579\\
0.821428571428571	1.42885341693048\\
0.928571428571429	1.33306243343818\\
1.03571428571429	1.36148285424442\\
1.14285714285714	1.12795115403245\\
1.25	2.70499167794527\\
1.35714285714286	1.70549750875492\\
1.46428571428571	1.05468396436752\\
1.57142857142857	1.01461152764347\\
1.67857142857143	1.01244114267616\\
1.78571428571429	1.00089301206024\\
1.89285714285714	1.05251177401512\\
2	1.04066913582938\\
};
\addlegendentry{Alg.~1}

\addplot [color=black, draw=none, mark=x, mark options={solid, black},forget plot]
  table[row sep=crcr]{%
0.5	4.53311112233382\\
0.607142857142857	3.00404147003167\\
0.714285714285714	2.60287727696309\\
0.821428571428571	2.30698252843754\\
0.928571428571429	2.02834142710791\\
1.03571428571429	2.2541554658307\\
1.14285714285714	1.71428518596467\\
1.25	4.05241838320877\\
1.35714285714286	3.03707327946664\\
1.46428571428571	1.69646250849025\\
1.57142857142857	1.44518894677256\\
1.67857142857143	1.39109799789736\\
1.78571428571429	1.36726090286156\\
1.89285714285714	1.47940003901863\\
2	1.38278580926933\\
};
\addplot [color=blue,mark=*,mark size=.5pt]
  table[row sep=crcr]{%
0.5	3.3100416598107\\
0.607142857142857	1.82297869560906\\
0.714285714285714	1.04960332494504\\
0.821428571428571	0.732343808449928\\
0.928571428571429	0.826879553933954\\
1.03571428571429	1.13158488365854\\
1.14285714285714	0.516658069162343\\
1.25	2.63068349616818\\
1.35714285714286	1.76072516102495\\
1.46428571428571	0.314541620787289\\
1.57142857142857	0.2989883576372\\
1.67857142857143	0.271350338091268\\
1.78571428571429	0.217917710612414\\
1.89285714285714	0.251718966438278\\
2	0.255273174381836\\
};

\addlegendentry{HB}

\addplot [color=blue, draw=none, mark=x, mark options={solid, blue},forget plot]
  table[row sep=crcr]{%
0.5	4.46185288305332\\
0.607142857142857	3.12330131363269\\
0.714285714285714	2.29670676236003\\
0.821428571428571	1.78306086442641\\
0.928571428571429	1.99109468159094\\
1.03571428571429	2.50501515336176\\
1.14285714285714	1.89303422097445\\
1.25	4.00420137226979\\
1.35714285714286	3.22470403229946\\
1.46428571428571	1.23799366452758\\
1.57142857142857	1.15164823117405\\
1.67857142857143	1.08013579954942\\
1.78571428571429	1.04709029373069\\
1.89285714285714	1.10232709648911\\
2	1.08905147587487\\
};
\end{axis}
\end{tikzpicture}%
\end{minipage}

\caption{Top left: The graph shows a randomly generated nonconvex function $f\in C_{\mu,1}$ in red. Its gradient (black, solid) is shown with the sector bound $[1/\kappa, 1]$ (black, dashed). Thus, $\nabla f$ satisfies the sector bound $[\alpha_\text{s},1]$ for some $\alpha_\text{s}>0$, but not the sector bound $[1/\kappa,1]$. Top right: The graph compares two trajectories obtained with Alg.~1 and Heavy Ball, respectively. The resulting convergence estimates $\rho_\text{sim}$ and $c_\text{sim}$ are indicated with dashed lines. Bottom left: The graph shows the convergence rate $\rho_\text{sim}$ when varying $\kappa$. Black relates to Alg.~1, blue to Heavy Ball. The solid lines show the mean across all initial conditions and all functions. The two-sigma bounds are marked with crosses. The dashed lines show the theoretical limits $1/\kappa^{1/3}$, respectively $1/\sqrt{\kappa}$. The mean for Heavy Ball lies almost exactly on the bound $1/\sqrt{\kappa}$. Bottom right: The graph shows the constant $c_\text{sim}$ for different $\kappa$. The solid lines indicate the mean across all initial conditions and all functions (black for Prop.~\ref{Prop:LBk} and blue for Heavy Ball). The crosses indicate the upper two-sigma bound.}
\label{Fig:Res}
\end{figure*}

\section{Simulation Results}\label{Sec:simRes}
This section illustrates the results from Sec.~\ref{Sec:LBsec} on a simulation example. We choose $C_{\mu,1}$ to be the set of all scalar functions that have a single non-degenerate minimum at the origin and whose Hessian is constant on the intervals $I_1=(-\infty, -4.5), I_2=(-4.5,-3.5),\dots, I_9=(4.5,\infty)$. The Hessian may change on the interval boundaries and takes the values summarized in Table~\ref{Tab:Hess}. Thus, the set $C_{\mu,1}$ includes certain nonconvex functions and satisfies the Assumptions (C1)-(C3). In addition, it is straightforward to generate random functions $f\in C_{\mu,1}$, by uniformly sampling potential values of the Hessian until the resulting function has a single local minimum.

\begin{table}
\begin{tabular}{c|c|c|c|c|c}
 & $I_1$ & $I_2$-$I_4$ & $I_5$ & $I_6-I_8$ & $I_9$\\ \hline
$\Delta f$ & $(0,1]$ & $[-1,1]$ &$[\mu,1]$ &$[-1,1]$ &$(0,1]$
\end{tabular}
\caption{The table summarizes the values that the Hessian can take. The interval $I_5$ ranges from $(-0.5,0.5)$, thus, the lower bound $\mu>0$ ensures that each $f$ has a non-degenerate local minimum at the origin. The Hessian is restricted to be positive on the intervals $I_1$ and $I_9$ in order to exclude any stationary point except the origin.}\label{Tab:Hess}
\end{table}

We evaluate and compare the performance of two algorithms. The first algorithm is that provided by Prop.~\ref{Prop:LBk}, for $k=3$, which we refer to as Alg.~1.
According to Prop.~\ref{Prop:Conv}, Alg.~1 is guaranteed to converge on functions $f\in C_{\mu,1}$, and its convergence rate is lower bounded by $1/\kappa^{1/3}$. The second algorithm is given by a variant of Heavy Ball:
\begin{equation}
\ddot{x}(t)=-2 \dot{x}(t)/\sqrt{\kappa} - \nabla f(x(t)). \label{eq:Alg2}
\end{equation}
The algorithm is guaranteed to converge on functions $f\in C_{\mu,1}$, since it dissipates energy as long as $|\dot{x}(t)|> 0$. According to Sec.~\ref{Sec:AccGradientFlow}, its convergence rate is upper bounded by $1/\sqrt{\kappa}$.

The trajectories are simulated with the standard fourth-order Runge-Kutta method with a time step of $0.01$. Both algorithms are evaluated on 50 randomly generated functions $f\in C_{\mu,1}$, and for each $f$, 50 simulations with randomized initial conditions are performed. The initial conditions are sampled from independent normal distributions with zero mean and standard deviation $4.5$ (motivated by the interval boundary $I_1$ and $I_9$). Each simulation terminates once a tolerance of $|z_\text{sim}(T_\text{max})|\leq 10^{-8}$ is reached, where $z_\text{sim}(t)$ denotes the simulated state trajectory that includes position, velocity, and, potentially, acceleration. The convergence rate $\rho_\text{sim}$ is estimated by performing a least-squares fit:
\begin{equation*}
\text{ln}(|z_\text{sim}(t)|/|z_\text{sim}(0)|) \approx  -\rho_\text{sim} t + \text{ln}(c),
\end{equation*}
with parameters $(\rho_\text{sim},\text{ln}(c))$. Only times $t>10$ are considered in order to avoid biases from fast-decaying transients. In a subsequent step, the smallest constant $c_\text{sim}$ satisfying 
\begin{equation*}
|z_\text{sim}(t)|\leq c_\text{sim} |z_\text{sim}(0)| e^{-\rho_\text{sim} t}, \quad \forall t\in [0,T_\text{max}]
\end{equation*}
is determined. The situation is illustrated on an example in Fig.~\ref{Fig:Res} (top right).\footnote{The estimation of $\rho_\text{sim}$ and $c_\text{sim}$ over the finite-time interval $t\in [0,T_\text{max}]$ is ill posed. For example, the constant $c_\text{sim}$ can be increased in favor of a better convergence rate $\rho_\text{sim}$. Nevertheless, the suggested least-squares procedure typically provides reliable estimates as shown in Fig.~\ref{Fig:Res} (top right). We also tested the procedure on quadratic functions, where the numerical results match the available closed-form expressions.}

The resulting values $\rho_\text{sim}$ and $c_\text{sim}$ for different $\kappa$ are summarized in Fig.~\ref{Fig:Res} (bottom row). The results indicate that the convergence rate of Heavy Ball roughly scales with $-1/\sqrt{\kappa}$, whereas the convergence rate of Alg.~1 scales with $-1/\kappa^{1/3}$ as suggested by Prop.~\ref{Prop:LBk} and Prop.~\ref{Prop:Conv}. Even though the convergence rate of Alg.~1 is superior, it tends to have higher constants $c_\text{sim}$. Also the variance in $\rho_\text{sim}$ and $c_\text{sim}$ seems higher. We observe that the obtained convergence rate estimates closely match the theoretical predictions.

\section{Conclusions}\label{Sec:Conclusion}
We have shown that the convergence rate of first-order optimization algorithms is lower bounded, not only in discretetime, but also in continuous time. The analysis shows that these limits are due to incomplete curvature information, since only upper and lower bounds on the local curvature of the objective function are assumed to be known. We found that the convergence rate of a $k$th-order algorithm is limited by $1/\kappa^{1/k}$ and provided an explicit algorithm that achieves this rate on smooth and strongly convex functions and even on certain nonconvex functions. We also note that this result is deceptive, since any algorithm whose convergence rate improves upon $\mathcal{O}(1/\sqrt{\kappa})$ necessarily includes dynamics that converge arbitrarily fast for large $\kappa$. Such an algorithm cannot be discretized with explicit linear methods. If such fast-converging dynamics are excluded, the analysis recovers the well-known asymptotic lower bound $\mathcal{O}(1/\sqrt{\kappa})$.

Numerical results with second and third-order dynamics indicate that the lower bound $1/\kappa^{1/3}$ is likely to be achieved even on nonconvex functions.

% Acknowledgements should only appear in the accepted version.
\section*{Acknowledgements}
We thank the Branco Weiss Fellowship, administered by ETH Zurich, for the generous support and the Office of Naval Research under grant number N00014-18-1-2764.
%
%\textbf{Do not} include acknowledgements in the initial version of
%the paper submitted for blind review.
%
%If a paper is accepted, the final camera-ready version can (and
%probably should) include acknowledgements. In this case, please
%place such acknowledgements in an unnumbered section at the
%end of the paper. Typically, this will include thanks to reviewers
%who gave useful comments, to colleagues who contributed to the ideas,
%and to funding agencies and corporate sponsors that provided financial
%support.

\bibliography{literature}
\bibliographystyle{icml2020}

\newpage
\appendix
\section{Interpretation of \eqref{eq:timenorm}}\label{App:TimeNorm}
In a neighborhood of the origin, the nonlinear dynamics \eqref{eq:ode} are closely approximated by the corresponding linear dynamics \eqref{eq:lineq} (with $\tilde{r}=0$). Thus, under mild assumptions (exponential stability and distinct eigenvalues of $A$) and for large $t$, the trajectories $x(t)$ of \eqref{eq:ode} can be approximated by \citep[p.~50]{bellmanODE}
\begin{equation*}
    x(t)\approx \sum_{j=1}^{nk} c_{j} \exp(\pi_{j} t),
\end{equation*}
where $c_{j} \in \mathbb{C}^n$ are constants and $\pi_j$ are the eigenvalues of the matrix $A$ in \eqref{eq:lineq}. These are complex conjugated and have a negative real part, which characterizes the (asymptotic) convergence rate. The time-normalization constraint \eqref{eq:timenorm} has the following interpretation:
\begin{proposition}
The time-normalization constraint \eqref{eq:timenorm} fixes the geometric mean of the eigenvalues $\pi_j$ for functions of the class $C_{L,L}$, that is,
\begin{equation*}
    \prod_{j=1}^{nk} |\pi_j| = 1.
\end{equation*}
This implies that the convergence rate is necessarily bounded by unity for functions of the class $C_{L,L}$.
\end{proposition}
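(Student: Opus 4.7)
The plan is to evaluate the characteristic polynomial of the linearized dynamics at $s=0$ to read off the product of its roots, and then interpret the time-normalization constraint \eqref{eq:timenorm} as fixing the value of that product. First, I would note that for $f\in C_{L,L}$ Assumption (C2) forces $\mathrm{spec}(\Delta f(0))=\{L\}$, and since $\Delta f(0)$ is symmetric this gives $\Delta f(0)=L I$. Combined with Prop.~\ref{Prop:Inv}, which allows us to restrict attention to scaled-identity coefficient matrices $G_j = g_j I$, $H_j = h_j I$, the characteristic polynomial factorizes into $n$ copies of a scalar polynomial of degree $k$, giving altogether $nk$ roots $-\pi_1,\dots,-\pi_{nk}\in \mathbb{C}$.

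Next, I would evaluate the full multivariate characteristic polynomial at $s=0$, which yields
\begin{equation*}
\det\bigl(G_0\,\Delta f(0)\bigr)=L^n\det(G_0)=\prod_{j=1}^{nk}\pi_j.
\end{equation*}
To determine $\det(G_0)$, I use the Taylor expansion \eqref{eq:taylor}: the matrix $G_0$ corresponds (up to a sign) to the partial derivative of $g$ with respect to its last argument at the origin, i.e.\ $\left.\partial g/\partial v\right|_{0,0}=-G_0$, while $\left.\partial h/\partial x\right|_0 = I$ by the state normalization discussed after \eqref{eq:taylor}. Substituting into \eqref{eq:timenorm} gives $(-1)^n\det(G_0)=(-1/L)^n$, hence $\det(G_0)=1/L^n$, and therefore
\begin{equation*}
\prod_{j=1}^{nk}\pi_j = 1.
\end{equation*}

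To pass from the product of the $\pi_j$ to the product of their moduli, I invoke the structure of the roots: since the polynomial has real coefficients, the $\pi_j$ come in complex conjugate pairs (contributing $|\pi_j|^2>0$ each), and any real $\pi_j$ must be positive by the asymptotic stability requirement (G3). Consequently $\prod_j\pi_j$ is real and positive, so the modulus can be moved inside the product, yielding $\prod_{j=1}^{nk}|\pi_j|=1$ as claimed.

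Finally, the bound on the convergence rate follows immediately: the convergence rate of \eqref{eq:lineq} equals $\min_j \mathrm{Re}(\pi_j)$, and since $\mathrm{Re}(\pi_j)\le |\pi_j|$ for every $j$, the convergence rate cannot exceed $\min_j|\pi_j|$. But a geometric mean equal to $1$ forces $\min_j|\pi_j|\le 1$, so the rate is bounded by unity on $C_{L,L}$. I expect the only delicate step is the identification $\left.\partial g/\partial v\right|_{0,0}=-G_0$ together with the sign bookkeeping of the determinants; the rest is essentially the observation that Vieta-type relations tie the constant term of the characteristic polynomial to both $\det(G_0)$ and $\prod_j \pi_j$.
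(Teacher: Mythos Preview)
Your argument is correct and follows essentially the same route as the paper: evaluate the (block) characteristic polynomial at $s=0$ to get $\det(G_0\,\Delta f(0))=\prod_j\pi_j$, then use the time-normalization constraint to pin this product to $1$, and finally pass to moduli via complex conjugacy and stability. The paper phrases this as computing $\det(-A)$ for the block companion matrix directly and does not normalize $\partial h/\partial x|_0$ to the identity, but the substance is identical.

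One caveat: your appeal to Prop.~\ref{Prop:Inv} is unnecessary and, strictly speaking, not logically valid here. That proposition shows that any achievable \emph{rate} can be matched with scaled-identity coefficients; it does not assert that the eigenvalue product $\prod|\pi_j|$ is preserved under that replacement. Since the statement is about arbitrary $(g,h)\in\mathcal{G}'$, you cannot restrict to scaled-identity matrices to prove it. Fortunately, your computation never actually uses that restriction---you work with a general $G_0$ throughout and only invoke the factorization to count roots, which any degree-$nk$ polynomial has anyway---so the core argument survives once you simply drop the reference to Prop.~\ref{Prop:Inv}.
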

\begin{proof}
The matrix $A$ is a companion matrix and therefore
\begin{equation*}
    \text{det}(-A)=\text{det}\left(-\left.\frac{\partial g(w,v)}{\partial v}\right|_{0,0} \Delta f(0) \left.\frac{\partial h}{\partial x}\right|_{0} \right),
\end{equation*}
where we exploited the fact that the partial derivative of $g$ with respect to $x$ vanishes, when evaluated at the origin, due to Assumption (G2). For functions of the class $C_{L,L}$, $\Delta f(0)$ reduces to $L$ times the identity. It follows that
\begin{equation*}
     \text{det}(-A) = (-L)^n \text{det}\left(\left.\frac{\partial g(w,v)}{\partial v}\right|_{0,0} \left.\frac{\partial h}{\partial x}\right|_{0} \right)=1,
\end{equation*}
where the time normalization constraint \eqref{eq:timenorm} has been used for the last equality.
The eigenvalues of $A$ are complex conjugated and have a negative real part, which concludes the first part of the proof: 
\begin{equation*}
    1=\text{det}(-A)=\prod_{j=1}^{nk} (-\pi_j)=\prod_{j=1}^{nk} |\pi_j|.
\end{equation*}
In addition, 
\begin{equation*}
    1= \prod_{j=1}^{nk} |\pi_j| \geq \left(\min_{j\in \{1,\dots,nk\}} |\pi_j|\right)^{nk},
\end{equation*}
which bounds the convergence rate for functions of the class $C_{L,L}$ by unity. \qed
\end{proof}

\section{Proof of Lemma~\ref{Lem:lin}}\label{App:Lemma}
In a neighborhood of the origin, exponential convergence of the nonlinear dynamics implies exponential convergence of the linearized dynamics.\footnote{This can be shown, for example, with the converse Lyapunov theorem from \citet[p.~195]{SastryNonlinear}.} In addition, \eqref{eq:lineq} relates the solutions of the nonlinear dynamics $z(t)$ to solutions of the linearized dynamics $\Delta z(t)$:
\begin{equation}
z(t)=\Delta z(t) + \int_{0}^{t} \exp(A (t-\tau)) \tilde{r}(z(\tau)) \diff \tau, \label{eq:linsol}
\end{equation}
with $\Delta z(0)=z(0)$ and where $\exp$ denotes the matrix exponential. We now exploit the fact that in a neighborhood of the origin, both $z(t)$ and $\Delta z(t)$ converge to the origin for $t\rightarrow \infty$, which concludes
\begin{equation*}
\int_{0}^{\infty}  \exp(A (t-\tau)) \tilde{r}(z(\tau)) \diff \tau = 0.
\end{equation*}
Rearranging \eqref{eq:linsol} therefore results in
\begin{equation*}
\Delta z(t)=z(t) + \int_{t}^{\infty} \exp(A (t-\tau)) \tilde{r}(z(\tau)) \diff \tau.
\end{equation*}
Moreover, $\exp(A (t-\tau)$ is bounded by $c_3 \exp(-\alpha (t-\tau))$, for a small enough $\alpha >0$ and a constant $c_3>0$, due to the fact that the linearized dynamics converge exponentially. Combined with the fact that $\tilde{r}$ is of second order, we obtain
\begin{multline*}
|\Delta z(t)| \leq c_1 |z(0)| \exp(-a_\text{r} t) + \\
c_3 c_4 c_1^2 |z(0)|^2 \exp(-\alpha t) \int_{t}^{\infty} \exp((\alpha -2 a_\text{r}) \tau) \diff \tau,
\end{multline*}
where $c_4>0$ is constant.
Without loss of generality, we can assume $\alpha< a_\text{r}$, which yields
\begin{multline*}
|\Delta z(t)| \leq c_1 |z(0)| \exp(-a_\text{r} t) + \\
c_3 c_4 c_1^2 |z(0)|^2 \exp(-2 a_\text{r} t)/(2 a_\text{r} - \alpha).
\end{multline*}
The fact that $|z(0)|=|\Delta z(0)|$ is bounded concludes the proof.

\section{Interpretation of Prop.~\ref{Prop:Inv}}\label{App:InterInv}
Prop.~\ref{Prop:Inv} has the following game-theoretic interpretation: We assume that Player I, who plays first, chooses the functions $(g,h)\in \mathcal{G}'$, which corresponds to the upper block in Fig.~\ref{Fig:Feedback}. Player II, who plays second, chooses the function $f\in C_{\mu,L}$, which defines the lower block in Fig.~\ref{Fig:Feedback}. Player I attempts to maximize the convergence rate, whereas Player II attempts to minimize the convergence rate. The statement of Prop.~\ref{Prop:Inv} implies that Player I does not loose anything by choosing dynamics that are invariant under orthogonal transformations (at least in the first-order approximation). If the dynamics are invariant under orthogonal transformations, a potential rotation of the function $f$ by Player II has no effect on the convergence rate, which provides an intuitive explanation of why this is a good choice for Player I.

\section{Background on the Nyquist criterion}\label{App:Nyquist}
This section provides additional details on the application of the Nyquist criterion in the proof of Prop.~\ref{Prop:BP}. We would like to emphasize that a complete treatment is beyond the scope of this article, and refer to the excellent texts by \citet{Astrom}, \citet{Desoer}, or \citet{Hahn} for further details. 

We start by rewriting the dynamics \eqref{eq:first-orderNorm} for a single component $x_j$ of $x$ in the following way:
\begin{gather}
x_j^{(k)} + \bar{g}_{k-1} x_j^{(k-1)} + \dots + \bar{g}_1 \dot{x}_j+ x_j/\kappa =u, \nonumber\\
y=x_j+h_1 \dot{x}_j + \dots + h_{k-1} x^{(k-1)}, \label{eq:feedback}
\end{gather}
with $u=-\bar{\lambda}_f y$, and where the coordinate frame has been chosen to diagonalize $\Delta f(0)$. As before, $\lambda_f \in [1/\kappa,1]$ denotes an eigenvalue of $\Delta f(0)/L$ and $\bar{\lambda}_f=\lambda_f-1/\kappa$. The dynamics \eqref{eq:first-orderNorm} are therefore interpreted as a feedback system according to Fig.~\ref{Fig:Feedback2}, where $P_\text{T}$ describes the relation between $y$ and $u$ according to \eqref{eq:feedback}. The relation from $y$ to $u$ can be described by the corresponding transfer function $P_\text{T}: \mathbb{C} \rightarrow \mathbb{C}$
\begin{equation}
P_\text{T}(s)=\frac{h_{k-1} s^{k-1} + \dots + h_1 s + 1}{s^{k} + \bar{g}_{k-1} s^{k-1} + \dots + \bar{g}_1 s + 1/\kappa},
\end{equation}
which is obtained by performing the Laplace transform on \eqref{eq:feedback} and eliminating the state variable $x_j$. We note that $\bar{\lambda}_f P_\text{T}(i\omega) = P(\omega)$ for all $\omega\in \mathbb{R}$, where $P(\omega)$ is defined in \eqref{eq:charPoly2}.

The Nyquist criterion provides a necessary and sufficient condition for the stability of the closed-loop system (the interconnection according to Fig.~\ref{Fig:Feedback2}) by means of the open-loop transfer function $\bar{\lambda}_f P_\text{T}$:

\begin{theorem}\label{Thm:Nyquist}
(Simplified version of the Nyquist criterion) Let the system $P_\text{T}$ be asymptotically stable in the sense of Lyapunov (for $u=0$). Then, the closed-loop system given by the interconnection according to Fig.~\ref{Fig:Feedback2} is asymptotically stable in the sense of Lyapunov if and only if the graph of $\bar{\lambda}_f P_\text{T}(i \omega)$ for $\omega \in (-\infty, \infty)$ does not encircle the point $-1$.
\end{theorem}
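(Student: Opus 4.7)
The plan is to prove Theorem~\ref{Thm:Nyquist} via Cauchy's argument principle from complex analysis, which is the standard route to the Nyquist criterion and which fits naturally here because $P_\text{T}$ is rational and the feedback loop is linear time-invariant.

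First I would make precise the equivalence between closed-loop asymptotic stability and the location of the roots of a single polynomial. Writing $P_\text{T}(s)=N(s)/D(s)$ with $D(s)=s^{k}+\bar{g}_{k-1}s^{k-1}+\dots+\bar{g}_{1}s+1/\kappa$ and $N(s)=h_{k-1}s^{k-1}+\dots+h_{1}s+1$, the closed-loop system in Fig.~\ref{Fig:Feedback2} has dynamics governed by $D(s)+\bar{\lambda}_{f}N(s)=0$, i.e.\ by the zeros of $F(s):=1+\bar{\lambda}_{f}P_\text{T}(s)$. Asymptotic stability is therefore equivalent to $F$ having no zeros in the closed right-half plane $\overline{\mathbb{C}_{+}}$.

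Next I would set up the Nyquist contour $\Gamma_{R}$ (the imaginary segment $[-iR,iR]$ closed by a right semicircle of radius $R$, traversed clockwise) and invoke the argument principle for $F$. For $R$ sufficiently large, the number of clockwise encirclements of the origin by $F(\Gamma_{R})$ equals $Z-P$, where $Z$ counts the zeros of $F$ and $P$ the poles of $F$ inside $\Gamma_{R}$. The poles of $F$ coincide with those of $P_\text{T}$, i.e.\ with the roots of $D$. By the standing assumption that $P_\text{T}$ is asymptotically stable, $D$ has no roots in $\overline{\mathbb{C}_{+}}$, so $P=0$. As $|s|\to\infty$, $F(s)\to 1$, so the semicircular part of $\Gamma_{R}$ contributes no net encirclement as $R\to\infty$, and the image reduces to the locus $\{F(i\omega):\omega\in\mathbb{R}\}$. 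A shift by $-1$ turns encirclements of the origin by $F(i\omega)$ into encirclements of the point $-1$ by $\bar{\lambda}_{f}P_\text{T}(i\omega)$. Hence closed-loop stability ($Z=0$) is equivalent to the Nyquist locus not encircling $-1$, which is exactly the claim.

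The main technical obstacles will be handling edge cases that the argument principle does not cover directly: (i) ensuring that $F$ has no zeros on the imaginary axis itself (so that the Nyquist locus is well defined and does not \emph{pass through} $-1$), which must be treated separately and can be merged into the ``encircle'' formulation by convention, and (ii) justifying that the right semicircle contributes zero encirclements in the limit, which requires verifying $\deg(D)\geq\deg(N)$, a property built into the Taylor-expansion structure of \eqref{eq:first-orderNorm}. A secondary subtlety is the orientation/sign convention for encirclements; I would fix clockwise traversal of $\Gamma_{R}$ throughout and note that, since $P_\text{T}$ has real coefficients, the Nyquist locus is symmetric about the real axis, so it suffices to track $\omega\geq 0$ up to the standard factor of two.

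Beyond these conventions, no new ideas are needed: the argument is self-contained once the argument principle is invoked, and I would cite a standard reference such as \citet{Astrom} or \citet{Desoer} for the detailed sign and contour bookkeeping rather than rehearsing it in full.
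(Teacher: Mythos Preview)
Your proposal is correct and follows essentially the same route as the paper: reduce closed-loop stability to the absence of zeros of $F(s)=1+\bar{\lambda}_f P_\text{T}(s)$ in $\overline{\mathbb{C}_+}$, note that the poles of $F$ coincide with those of $P_\text{T}$ (hence $P=0$ by the open-loop stability assumption), and apply Cauchy's argument principle on the Nyquist contour, citing \citet{Desoer} for the full bookkeeping. Your treatment of the edge cases (imaginary-axis zeros, vanishing semicircle contribution via $\deg D\geq\deg N$) is slightly more explicit than the paper's sketch, but the underlying argument is the same.
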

The statement of the Nyquist theorem is illustrated in Fig.~\ref{fig:NyquistExample}. 

The proof relies on the following observations: 1) The asymptotic stability of the closed-loop system is equivalent to the condition that the rational function $1 + \bar{\lambda}_f P_\text{T}(s)$ does not have any zeros in the closed right-half complex plane. 2) The poles of $1+ \bar{\lambda}_f P_\text{T}(s)$ are the same as the poles of $\bar{\lambda}_f P_\text{T}(s)$. Therefore, by assumption, $\bar{\lambda}_f P_\text{T}(s)$ and $1+\bar{\lambda}_f P_\text{T}(s)$ do not have any poles in the closed right-half complex plane. 3) We consider the Nyquist contour in the complex plane consisting of a vertical segment $[-iR,iR]$ along the imaginary axis and a right halfcircle of radius $R$ centered at the origin. For large enough $R$, the contour essentially encircles the right-half complex plane. Cauchy's argument principle can then be used to relate the argument of $1+\bar{\lambda}_f P_\text{T}(s)$, i.e. the encirclement of $-1$ by $\bar{\lambda}_f P_\text{T}(s)$, to the number of zeros of $1+\bar{\lambda}_f P_\text{T}(s)$ in the closed right-half complex plane. A detailed proof of a more general version of the theorem can be found in \citet[p.~368]{Desoer}.

The importance of the Nyquist theorem for the study of dynamical systems cannot be emphasized enough. One of the reasons is that the distance of the graph of $\bar{\lambda}_f P_\text{T}(i\omega)$ to the point $-1$ provides a meaningful characterization of how close the closed-loop system is to the boundary of stability. Another important feature is that the graph of $\bar{\lambda}_f P_\text{T}(i\omega)$ can often be obtained from input-output measurements alone, and that the theorem generalizes to systems with delays and certain nonlinearities. Finally, it is evident from the sketch of the proof that by shifting the Nyquist contour, the Nyquist theorem can also be used to derive convergence rates: Given that the system $P_\text{T}$ converges with rate $\rho>0$, the closed-loop system converges likewise with rate $\rho$ if and only if the graph of $\bar{\lambda}_f P_\text{T}(-\rho + i\omega)$ for $\omega \in (-\infty,\infty)$ does not encircle the point $-1$.

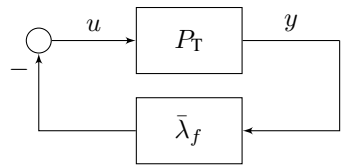
\begin{figure}
    \centering
    \tikzstyle{block} = [draw, rectangle, 
    minimum height=2.5em, minimum width=4em]
\tikzstyle{sum} = [draw, circle, node distance=.8cm]
\tikzstyle{input} = [coordinate]
\tikzstyle{output} = [coordinate]
\tikzstyle{pinstyle} = [pin edge={to-,thin,black}]

% The block diagram code is probably more verbose than necessary
\begin{tikzpicture}[auto, node distance=2cm,>=latex']
    % We start by placing the blocks
    \node [block] (system) {$P_\text{T}$};
            								 
    \node [block, below of=system,node distance=1.2cm] (grad) {$\bar{\lambda}_f$};

    % We draw an edge between the controller and system block to 
    % calculate the coordinate u. We need it to place the measurement block. 
    \node [sum, left of=system, node distance=2cm] (ln) {};
 	\node (rn) [coordinate,right of=system, node distance=2cm] {};
 	\draw [-] (system) -- node[name=y]{$y$} (rn);   
 	\draw [->] (rn) |- (grad); 
	\draw [->] (grad) -| node [pos=0.9] {$-$} (ln);
	\draw [->] (ln) -- node[name=u]{$u$} (system);
    % Once the nodes are placed, connecting them is easy. 
%    \draw [draw,->] (input) -- node {$r$} (sum);
%    \draw [->] (sum) -- node {$e$} (controller);
%    \draw [->] (system) -- node [name=y] {$y$}(output);
%    \draw [->] (y) |- (measurements);
%    \draw [->] (measurements) -| node[pos=0.99] {$-$} 
%        node [near end] {$y_m$} (sum);
\end{tikzpicture}
    \caption{Interpretation of \eqref{eq:first-orderNorm} as feedback system as done in Prop.~\ref{Prop:BP}. The system $P_\text{T}$ in the upper block is described by \eqref{eq:feedback} and the lower block represents the multiplication by the scalar $\bar{\lambda}_f \in [0,1-1/\kappa]$.}
    \label{Fig:Feedback2}
\end{figure}

\begin{figure}
    \setlength{\figurewidth}{.8\columnwidth}
    \setlength{\figureheight}{0.45\columnwidth}
    \centering
    \input{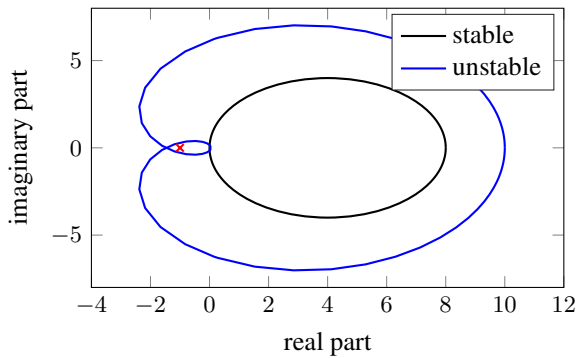}
    \caption{The figure illustrates the statement of the Nyquist criterion by showing the graphs of two open-loop transfer functions evaluated along the imaginary axis. The first graph (black) does not encircle the point $-1$ leading to a stable closed-loop system, whereas the second graph (blue) encircles the point $-1$, which leads to an unstable closed-loop system.}
    \label{fig:NyquistExample}
\end{figure}

\section{The relation to discrete time}\label{App:BackgroundDiscretization}
This section provides additional context on the discretization of \eqref{eq:ode} and the result of \citet{Nevanlinna}. Excellent books on the subject are \citet{Hairer} and \citet{Butcher}. We will concentrate on explicit discretization methods, as these solely rely on evaluating the right-hand side of \eqref{eq:ode} and do not require the solution of a system of equations at each time step. 

The two commonly used classes of numerical integration methods are linear multistage and linear multistep methods. Explicit linear multistage methods compute the next iterate as a linear combination of the past iterate and different evaluations of the right-hand side of \eqref{eq:ode} at predefined intermediate locations. In contrast, explicit linear multistep methods computed the next iterate as a linear combination of \emph{several} past iterates and the corresponding evaluations of the right-hand side of \eqref{eq:ode}. Explicit general linear methods, as considered in \citet{Nevanlinna}, combine both ideas, and include evaluations of \eqref{eq:ode} at intermediate locations, as well as multiple past iterates (and potentially also higher derivatives).

A fundamental requirement for the discretization of \eqref{eq:ode} is that the resulting discrete trajectories should converge linearly. Similar to the reasoning in Sec.~\ref{Sec:LBsec}, we can argue that this necessarily implies that the discretization of the corresponding linearized dynamics \eqref{eq:lineq} must be asymptotically stable. This reduces the problem to studying the discretization of the model equation $\dot{x}_\text{m}(t)=\lambda_\text{m} x_\text{m}(t)$, for different $\lambda_\text{m}\in \mathbb{C}$. In our case, $\lambda_\text{m}$ corresponds to the roots of the characteristic polynomial \eqref{eq:charPoly}. The set of all $\lambda_\text{m}\in \mathbb{C}$, $\text{real}(\lambda_\text{m})\leq 0$, such that the resulting discretization of $\dot{x}_\text{m}=\lambda_\text{m} x_\text{m}$ is stable is called the stability region of the numerical integration scheme. It has been shown in \citet{Nevanlinna} that the stability region of any explicit general linear method is necessarily bounded. In view of Prop.~\ref{Prop:BP}, this  implies that the discretization of any continuous-time algorithm that achieves a rate exceeding $\mathcal{O}(1/\sqrt{\kappa})$ with a general linear method will fail to even converge at a linear rate. This precludes any discrete algorithm that arises from the discretization of \eqref{eq:ode} with a general linear method, and achieves a convergence rate faster than $\mathcal{O}(1/\sqrt{\kappa})$.

The class of general linear methods considered in \citet{Nevanlinna} is large, and includes Runge-Kutta methods, multistep methods, multistep methods using higher derivatives, and predictor-corrector formulas. It is also important to notice that nonlinear or implicit discretization schemes, which are excluded in the analysis, typically require a much higher computational effort at each iteration.

%The work of \citet{Nevanlinna} shows that for explicit general linear discretization methods the stability region is necessarily bounded. This means that a discretization of \eqref{eq:fastAlg} with an explicit general linear method, such as a Runge-Kutta, a linear multistep, or predictor-corrector methods, will necessarily be unstable for large $\kappa$. 

%\newpage

\begin{figure*}
\mbox{}% Left margin marker
\hfill %
\begin{minipage}{.6\columnwidth}
    \setlength{\figurewidth}{.9\columnwidth}
    \setlength{\figureheight}{0.7\columnwidth}
    \centering
    % This file was created by matlab2tikz.
%
%The latest updates can be retrieved from
%  http://www.mathworks.com/matlabcentral/fileexchange/22022-matlab2tikz-matlab2tikz
%where you can also make suggestions and rate matlab2tikz.
%
\begin{tikzpicture}

\begin{axis}[%
width=0.951\figurewidth,
height=\figureheight,
at={(0\figurewidth,0\figureheight)},
scale only axis,
xmin=-3.43333333333333,
xmax=4.1,
xlabel style={font=\color{white!15!black}},
xlabel={real part},
xlabel near ticks,
ymin=-2.89193548387097,
ymax=2.89193548387097,
ylabel style={font=\color{white!15!black}},
ylabel={imaginary part},
ylabel near ticks,
%axis background/.style={fill=white},
legend style={legend cell align=left, align=left, draw=white!15!black},
disabledatascaling,
axis equal
]
\begin{pgfonlayer}{background}
  \draw[fill=black!20!white, draw=black!20!white] (axis cs:-2.33333333333333,0) circle (1);
\end{pgfonlayer}
\addplot [color=black,forget plot]
  table[row sep=crcr]{%
4	0\\
4	0\\
4	0\\
4	0\\
4	0\\
4	0\\
4	0\\
4	-6.21724893790088e-15\\
4	-6.30606677987089e-14\\
4	-6.3504757008559e-13\\
4	-6.3495875224362e-12\\
4	-7.99982302623903e-12\\
4	-6.3495875224362e-11\\
4	-6.34960528600459e-10\\
4	-6.34960439782617e-09\\
4	-6.34960422019049e-08\\
4	-7.99999995138023e-08\\
3.9999999999999	-6.34960420242681e-07\\
3.99999999998992	-6.34960420774532e-06\\
3.99999999899206	-6.34960420624523e-05\\
3.9999999984	-7.99999999676445e-05\\
3.99999989920632	-0.000634960404786968\\
3.999989920657	-0.00634958820791304\\
3.999984000064	-0.00799996800012817\\
3.99899231708027	-0.0634800461094593\\
3.9984006397441	-0.079968012794882\\
3.99793317649804	-0.0909011674757181\\
3.99732917370828	-0.103325562441295\\
3.99654881118492	-0.117442941703917\\
3.9955406965515	-0.133481565793814\\
3.99423852961729	-0.151699330881392\\
3.99255682359406	-0.17238707825341\\
3.9903854287711	-0.195872011619\\
3.98758254397725	-0.22252107962381\\
3.98396583240476	-0.252744091623315\\
3.9793011844799	-0.286996204010551\\
3.97328859925838	-0.325779225913639\\
3.96554460286519	-0.369640925964027\\
3.95558061614469	-0.419171150914693\\
3.94277677257346	-0.474993065158917\\
3.92635095086807	-0.537747165580323\\
3.90532334723866	-0.608064916326581\\
3.90168373075187	-0.619353685864273\\
3.87847794832012	-0.686527929275298\\
3.84432401564926	-0.773607733479556\\
3.80106343167746	-0.869580654726294\\
3.74657371571341	-0.974412774741307\\
3.67842168356242	-1.08761236299864\\
3.59392847033223	-1.20805299199347\\
3.49031115707938	-1.33378133705819\\
3.3649259564279	-1.46184032420418\\
3.21562736224065	-1.58815934848234\\
3.04123097633317	-1.70758251745685\\
2.8420227861645	-1.81411069882182\\
2.62020285358139	-1.90140695812588\\
2.38011066380083	-1.96354676116075\\
2.12808749583035	-1.99589418392156\\
1.87191250416965	-1.99589418392156\\
1.61988933619917	-1.96354676116075\\
1.37979714641861	-1.90140695812588\\
1.1579772138355	-1.81411069882182\\
1.1364146136666	-1.8039457532064\\
0.958769023666834	-1.70758251745685\\
0.784372637759352	-1.58815934848234\\
0.635074043572103	-1.46184032420418\\
0.509688842920626	-1.33378133705819\\
0.406071529667772	-1.20805299199347\\
0.321578316437581	-1.08761236299865\\
0.25342628428659	-0.974412774741307\\
0.19893656832254	-0.869580654726294\\
0.155675984350737	-0.773607733479556\\
0.121522051679879	-0.686527929275298\\
0.0946766527613359	-0.608064916326582\\
0.0736490491319272	-0.537747165580323\\
0.0572232274265433	-0.474993065158917\\
0.0444193838553086	-0.419171150914694\\
0.0344553971348155	-0.369640925964028\\
0.0267114007416175	-0.32577922591364\\
0.0206988155201003	-0.286996204010552\\
0.0160341675952417	-0.252744091623315\\
0.0158112634789833	-0.250988162794844\\
0.0124174560227557	-0.22252107962381\\
0.00961457122889731	-0.195872011619\\
0.00744317640593932	-0.17238707825341\\
0.00576147038270832	-0.151699330881393\\
0.00445930344850352	-0.133481565793814\\
0.00345118881507674	-0.117442941703917\\
0.00267082629172262	-0.103325562441295\\
0.00206682350195921	-0.0909011674757183\\
0.00159936025589764	-0.0799680127948821\\
0.000158733805841562	-0.0251974210375809\\
1.59999360002568e-05	-0.007999968000128\\
1.58740042200835e-06	-0.00251984109979014\\
1.58740104567103e-08	-0.000251984208978975\\
1.59999999935779e-09	-7.9999999968e-05\\
1.58740105193525e-10	-2.51984209968975e-05\\
1.58740105189189e-12	-2.51984209978875e-06\\
1.58740105607856e-14	-2.51984209978974e-07\\
1.6000000129382e-15	-7.99999999999999e-08\\
1.58740107556036e-16	-2.51984209978975e-08\\
1.5874012144266e-18	-2.51984209978975e-09\\
1.58740260308906e-20	-2.51984209978975e-10\\
1.58742208488702e-22	-2.51984209978975e-11\\
1.60001359111977e-23	-8.00000000000001e-12\\
1.58750499939795e-24	-2.51984209978975e-12\\
1.59057221388135e-26	-2.51984209978975e-13\\
1.58207814466889e-28	-2.51984209978975e-14\\
1.83284785865704e-30	-2.51984209978975e-15\\
1.54295828093213e-32	-2.51984209978975e-16\\
4.8985871965894e-33	-7.99999999999998e-17\\
1.54295828093213e-33	-2.51984209978975e-17\\
1.54295828093213e-34	-2.51984209978975e-18\\
1.54295828093213e-35	-2.51984209978975e-19\\
1.54295828093213e-36	-2.51984209978975e-20\\
};

\addplot [color=black, forget plot]
  table[row sep=crcr]{%
4	-0\\
4	-0\\
4	-0\\
4	-0\\
4	-0\\
4	-0\\
4	-0\\
4	6.21724893790088e-15\\
4	6.30606677987089e-14\\
4	6.3504757008559e-13\\
4	6.3495875224362e-12\\
4	7.99982302623903e-12\\
4	6.3495875224362e-11\\
4	6.34960528600459e-10\\
4	6.34960439782617e-09\\
4	6.34960422019049e-08\\
4	7.99999995138023e-08\\
3.9999999999999	6.34960420242681e-07\\
3.99999999998992	6.34960420774532e-06\\
3.99999999899206	6.34960420624523e-05\\
3.9999999984	7.99999999676445e-05\\
3.99999989920632	0.000634960404786968\\
3.999989920657	0.00634958820791304\\
3.999984000064	0.00799996800012817\\
3.99899231708027	0.0634800461094593\\
3.9984006397441	0.079968012794882\\
3.99793317649804	0.0909011674757181\\
3.99732917370828	0.103325562441295\\
3.99654881118492	0.117442941703917\\
3.9955406965515	0.133481565793814\\
3.99423852961729	0.151699330881392\\
3.99255682359406	0.17238707825341\\
3.9903854287711	0.195872011619\\
3.98758254397725	0.22252107962381\\
3.98396583240476	0.252744091623315\\
3.9793011844799	0.286996204010551\\
3.97328859925838	0.325779225913639\\
3.96554460286519	0.369640925964027\\
3.95558061614469	0.419171150914693\\
3.94277677257346	0.474993065158917\\
3.92635095086807	0.537747165580323\\
3.90532334723866	0.608064916326581\\
3.90168373075187	0.619353685864273\\
3.87847794832012	0.686527929275298\\
3.84432401564926	0.773607733479556\\
3.80106343167746	0.869580654726294\\
3.74657371571341	0.974412774741307\\
3.67842168356242	1.08761236299864\\
3.59392847033223	1.20805299199347\\
3.49031115707938	1.33378133705819\\
3.3649259564279	1.46184032420418\\
3.21562736224065	1.58815934848234\\
3.04123097633317	1.70758251745685\\
2.8420227861645	1.81411069882182\\
2.62020285358139	1.90140695812588\\
2.38011066380083	1.96354676116075\\
2.12808749583035	1.99589418392156\\
1.87191250416965	1.99589418392156\\
1.61988933619917	1.96354676116075\\
1.37979714641861	1.90140695812588\\
1.1579772138355	1.81411069882182\\
1.1364146136666	1.8039457532064\\
0.958769023666834	1.70758251745685\\
0.784372637759352	1.58815934848234\\
0.635074043572103	1.46184032420418\\
0.509688842920626	1.33378133705819\\
0.406071529667772	1.20805299199347\\
0.321578316437581	1.08761236299865\\
0.25342628428659	0.974412774741307\\
0.19893656832254	0.869580654726294\\
0.155675984350737	0.773607733479556\\
0.121522051679879	0.686527929275298\\
0.0946766527613359	0.608064916326582\\
0.0736490491319272	0.537747165580323\\
0.0572232274265433	0.474993065158917\\
0.0444193838553086	0.419171150914694\\
0.0344553971348155	0.369640925964028\\
0.0267114007416175	0.32577922591364\\
0.0206988155201003	0.286996204010552\\
0.0160341675952417	0.252744091623315\\
0.0158112634789833	0.250988162794844\\
0.0124174560227557	0.22252107962381\\
0.00961457122889731	0.195872011619\\
0.00744317640593932	0.17238707825341\\
0.00576147038270832	0.151699330881393\\
0.00445930344850352	0.133481565793814\\
0.00345118881507674	0.117442941703917\\
0.00267082629172262	0.103325562441295\\
0.00206682350195921	0.0909011674757183\\
0.00159936025589764	0.0799680127948821\\
0.000158733805841562	0.0251974210375809\\
1.59999360002568e-05	0.007999968000128\\
1.58740042200835e-06	0.00251984109979014\\
1.58740104567103e-08	0.000251984208978975\\
1.59999999935779e-09	7.9999999968e-05\\
1.58740105193525e-10	2.51984209968975e-05\\
1.58740105189189e-12	2.51984209978875e-06\\
1.58740105607856e-14	2.51984209978974e-07\\
1.6000000129382e-15	7.99999999999999e-08\\
1.58740107556036e-16	2.51984209978975e-08\\
1.5874012144266e-18	2.51984209978975e-09\\
1.58740260308906e-20	2.51984209978975e-10\\
1.58742208488702e-22	2.51984209978975e-11\\
1.60001359111977e-23	8.00000000000001e-12\\
1.58750499939795e-24	2.51984209978975e-12\\
1.59057221388135e-26	2.51984209978975e-13\\
1.58207814466889e-28	2.51984209978975e-14\\
1.83284785865704e-30	2.51984209978975e-15\\
1.54295828093213e-32	2.51984209978975e-16\\
4.8985871965894e-33	7.99999999999998e-17\\
1.54295828093213e-33	2.51984209978975e-17\\
1.54295828093213e-34	2.51984209978975e-18\\
1.54295828093213e-35	2.51984209978975e-19\\
1.54295828093213e-36	2.51984209978975e-20\\
};

%\draw[fill=black!20!white, draw=black!20!white] (axis cs:-2.33333333333333,0) circle (1);
\end{axis}
\end{tikzpicture}%
\end{minipage} \hspace{.2cm} \hfill %
\begin{minipage}{.6\columnwidth}
    \setlength{\figurewidth}{.9\columnwidth}
    \setlength{\figureheight}{0.7\columnwidth}
    \centering
    % This file was created by matlab2tikz.
%
%The latest updates can be retrieved from
%  http://www.mathworks.com/matlabcentral/fileexchange/22022-matlab2tikz-matlab2tikz
%where you can also make suggestions and rate matlab2tikz.
%
\begin{tikzpicture}

\begin{axis}[%
width=0.951\figurewidth,
height=\figureheight,
at={(0\figurewidth,0\figureheight)},
scale only axis,
xmin=-3.4333,
xmax=4.1,
xlabel style={font=\color{white!15!black}},
xlabel={real part},
xlabel near ticks,
ymin=-2.8919,
ymax=2.8919,
%ylabel style={font=\color{white!15!black}},
%ylabel={imaginary part},
%ylabel near ticks,
%axis background/.style={fill=white},
legend style={legend cell align=left, align=left, draw=white!15!black},
axis equal
]
\begin{pgfonlayer}{background}
  \draw[fill=black!20!white, draw=black!20!white] (axis cs:-3.43333333333333,-3.05) rectangle (axis cs:-1.33333333333333,3.06);
\end{pgfonlayer}
\addplot [color=black,forget plot]
  table[row sep=crcr]{%
4	0\\
4	0\\
4	0\\
4	0\\
4	0\\
4	0\\
4	0\\
4	-6.21724893790088e-15\\
4	-6.30606677987089e-14\\
4	-6.3504757008559e-13\\
4	-6.3495875224362e-12\\
4	-7.99982302623903e-12\\
4	-6.3495875224362e-11\\
4	-6.34960528600459e-10\\
4	-6.34960439782617e-09\\
4	-6.34960422019049e-08\\
4	-7.99999995138023e-08\\
3.9999999999999	-6.34960420242681e-07\\
3.99999999998992	-6.34960420774532e-06\\
3.99999999899206	-6.34960420624523e-05\\
3.9999999984	-7.99999999676445e-05\\
3.99999989920632	-0.000634960404786968\\
3.999989920657	-0.00634958820791304\\
3.999984000064	-0.00799996800012817\\
3.99899231708027	-0.0634800461094593\\
3.9984006397441	-0.079968012794882\\
3.99793317649804	-0.0909011674757181\\
3.99732917370828	-0.103325562441295\\
3.99654881118492	-0.117442941703917\\
3.9955406965515	-0.133481565793814\\
3.99423852961729	-0.151699330881392\\
3.99255682359406	-0.17238707825341\\
3.9903854287711	-0.195872011619\\
3.98758254397725	-0.22252107962381\\
3.98396583240476	-0.252744091623315\\
3.9793011844799	-0.286996204010551\\
3.97328859925838	-0.325779225913639\\
3.96554460286519	-0.369640925964027\\
3.95558061614469	-0.419171150914693\\
3.94277677257346	-0.474993065158917\\
3.92635095086807	-0.537747165580323\\
3.90532334723866	-0.608064916326581\\
3.90168373075187	-0.619353685864273\\
3.87847794832012	-0.686527929275298\\
3.84432401564926	-0.773607733479556\\
3.80106343167746	-0.869580654726294\\
3.74657371571341	-0.974412774741307\\
3.67842168356242	-1.08761236299864\\
3.59392847033223	-1.20805299199347\\
3.49031115707938	-1.33378133705819\\
3.3649259564279	-1.46184032420418\\
3.21562736224065	-1.58815934848234\\
3.04123097633317	-1.70758251745685\\
2.8420227861645	-1.81411069882182\\
2.62020285358139	-1.90140695812588\\
2.38011066380083	-1.96354676116075\\
2.12808749583035	-1.99589418392156\\
1.87191250416965	-1.99589418392156\\
1.61988933619917	-1.96354676116075\\
1.37979714641861	-1.90140695812588\\
1.1579772138355	-1.81411069882182\\
1.1364146136666	-1.8039457532064\\
0.958769023666834	-1.70758251745685\\
0.784372637759352	-1.58815934848234\\
0.635074043572103	-1.46184032420418\\
0.509688842920626	-1.33378133705819\\
0.406071529667772	-1.20805299199347\\
0.321578316437581	-1.08761236299865\\
0.25342628428659	-0.974412774741307\\
0.19893656832254	-0.869580654726294\\
0.155675984350737	-0.773607733479556\\
0.121522051679879	-0.686527929275298\\
0.0946766527613359	-0.608064916326582\\
0.0736490491319272	-0.537747165580323\\
0.0572232274265433	-0.474993065158917\\
0.0444193838553086	-0.419171150914694\\
0.0344553971348155	-0.369640925964028\\
0.0267114007416175	-0.32577922591364\\
0.0206988155201003	-0.286996204010552\\
0.0160341675952417	-0.252744091623315\\
0.0158112634789833	-0.250988162794844\\
0.0124174560227557	-0.22252107962381\\
0.00961457122889731	-0.195872011619\\
0.00744317640593932	-0.17238707825341\\
0.00576147038270832	-0.151699330881393\\
0.00445930344850352	-0.133481565793814\\
0.00345118881507674	-0.117442941703917\\
0.00267082629172262	-0.103325562441295\\
0.00206682350195921	-0.0909011674757183\\
0.00159936025589764	-0.0799680127948821\\
0.000158733805841562	-0.0251974210375809\\
1.59999360002568e-05	-0.007999968000128\\
1.58740042200835e-06	-0.00251984109979014\\
1.58740104567103e-08	-0.000251984208978975\\
1.59999999935779e-09	-7.9999999968e-05\\
1.58740105193525e-10	-2.51984209968975e-05\\
1.58740105189189e-12	-2.51984209978875e-06\\
1.58740105607856e-14	-2.51984209978974e-07\\
1.6000000129382e-15	-7.99999999999999e-08\\
1.58740107556036e-16	-2.51984209978975e-08\\
1.5874012144266e-18	-2.51984209978975e-09\\
1.58740260308906e-20	-2.51984209978975e-10\\
1.58742208488702e-22	-2.51984209978975e-11\\
1.60001359111977e-23	-8.00000000000001e-12\\
1.58750499939795e-24	-2.51984209978975e-12\\
1.59057221388135e-26	-2.51984209978975e-13\\
1.58207814466889e-28	-2.51984209978975e-14\\
1.83284785865704e-30	-2.51984209978975e-15\\
1.54295828093213e-32	-2.51984209978975e-16\\
4.8985871965894e-33	-7.99999999999998e-17\\
1.54295828093213e-33	-2.51984209978975e-17\\
1.54295828093213e-34	-2.51984209978975e-18\\
1.54295828093213e-35	-2.51984209978975e-19\\
1.54295828093213e-36	-2.51984209978975e-20\\
};

\addplot [color=black,forget plot]
  table[row sep=crcr]{%
4	-0\\
4	-0\\
4	-0\\
4	-0\\
4	-0\\
4	-0\\
4	-0\\
4	6.21724893790088e-15\\
4	6.30606677987089e-14\\
4	6.3504757008559e-13\\
4	6.3495875224362e-12\\
4	7.99982302623903e-12\\
4	6.3495875224362e-11\\
4	6.34960528600459e-10\\
4	6.34960439782617e-09\\
4	6.34960422019049e-08\\
4	7.99999995138023e-08\\
3.9999999999999	6.34960420242681e-07\\
3.99999999998992	6.34960420774532e-06\\
3.99999999899206	6.34960420624523e-05\\
3.9999999984	7.99999999676445e-05\\
3.99999989920632	0.000634960404786968\\
3.999989920657	0.00634958820791304\\
3.999984000064	0.00799996800012817\\
3.99899231708027	0.0634800461094593\\
3.9984006397441	0.079968012794882\\
3.99793317649804	0.0909011674757181\\
3.99732917370828	0.103325562441295\\
3.99654881118492	0.117442941703917\\
3.9955406965515	0.133481565793814\\
3.99423852961729	0.151699330881392\\
3.99255682359406	0.17238707825341\\
3.9903854287711	0.195872011619\\
3.98758254397725	0.22252107962381\\
3.98396583240476	0.252744091623315\\
3.9793011844799	0.286996204010551\\
3.97328859925838	0.325779225913639\\
3.96554460286519	0.369640925964027\\
3.95558061614469	0.419171150914693\\
3.94277677257346	0.474993065158917\\
3.92635095086807	0.537747165580323\\
3.90532334723866	0.608064916326581\\
3.90168373075187	0.619353685864273\\
3.87847794832012	0.686527929275298\\
3.84432401564926	0.773607733479556\\
3.80106343167746	0.869580654726294\\
3.74657371571341	0.974412774741307\\
3.67842168356242	1.08761236299864\\
3.59392847033223	1.20805299199347\\
3.49031115707938	1.33378133705819\\
3.3649259564279	1.46184032420418\\
3.21562736224065	1.58815934848234\\
3.04123097633317	1.70758251745685\\
2.8420227861645	1.81411069882182\\
2.62020285358139	1.90140695812588\\
2.38011066380083	1.96354676116075\\
2.12808749583035	1.99589418392156\\
1.87191250416965	1.99589418392156\\
1.61988933619917	1.96354676116075\\
1.37979714641861	1.90140695812588\\
1.1579772138355	1.81411069882182\\
1.1364146136666	1.8039457532064\\
0.958769023666834	1.70758251745685\\
0.784372637759352	1.58815934848234\\
0.635074043572103	1.46184032420418\\
0.509688842920626	1.33378133705819\\
0.406071529667772	1.20805299199347\\
0.321578316437581	1.08761236299865\\
0.25342628428659	0.974412774741307\\
0.19893656832254	0.869580654726294\\
0.155675984350737	0.773607733479556\\
0.121522051679879	0.686527929275298\\
0.0946766527613359	0.608064916326582\\
0.0736490491319272	0.537747165580323\\
0.0572232274265433	0.474993065158917\\
0.0444193838553086	0.419171150914694\\
0.0344553971348155	0.369640925964028\\
0.0267114007416175	0.32577922591364\\
0.0206988155201003	0.286996204010552\\
0.0160341675952417	0.252744091623315\\
0.0158112634789833	0.250988162794844\\
0.0124174560227557	0.22252107962381\\
0.00961457122889731	0.195872011619\\
0.00744317640593932	0.17238707825341\\
0.00576147038270832	0.151699330881393\\
0.00445930344850352	0.133481565793814\\
0.00345118881507674	0.117442941703917\\
0.00267082629172262	0.103325562441295\\
0.00206682350195921	0.0909011674757183\\
0.00159936025589764	0.0799680127948821\\
0.000158733805841562	0.0251974210375809\\
1.59999360002568e-05	0.007999968000128\\
1.58740042200835e-06	0.00251984109979014\\
1.58740104567103e-08	0.000251984208978975\\
1.59999999935779e-09	7.9999999968e-05\\
1.58740105193525e-10	2.51984209968975e-05\\
1.58740105189189e-12	2.51984209978875e-06\\
1.58740105607856e-14	2.51984209978974e-07\\
1.6000000129382e-15	7.99999999999999e-08\\
1.58740107556036e-16	2.51984209978975e-08\\
1.5874012144266e-18	2.51984209978975e-09\\
1.58740260308906e-20	2.51984209978975e-10\\
1.58742208488702e-22	2.51984209978975e-11\\
1.60001359111977e-23	8.00000000000001e-12\\
1.58750499939795e-24	2.51984209978975e-12\\
1.59057221388135e-26	2.51984209978975e-13\\
1.58207814466889e-28	2.51984209978975e-14\\
1.83284785865704e-30	2.51984209978975e-15\\
1.54295828093213e-32	2.51984209978975e-16\\
4.8985871965894e-33	7.99999999999998e-17\\
1.54295828093213e-33	2.51984209978975e-17\\
1.54295828093213e-34	2.51984209978975e-18\\
1.54295828093213e-35	2.51984209978975e-19\\
1.54295828093213e-36	2.51984209978975e-20\\
};

%\draw[fill=black!20!white, draw=black!20!white] (axis cs:-3.43333333333333,-4) rectangle (axis cs:-1.33333333333333,4);
\end{axis}
\end{tikzpicture}%
\end{minipage}\hfill %
\begin{minipage}{.6\columnwidth}
     \setlength{\figurewidth}{.9\columnwidth}
    \setlength{\figureheight}{0.7\columnwidth}
    \centering
    % This file was created by matlab2tikz.
%
%The latest updates can be retrieved from
%  http://www.mathworks.com/matlabcentral/fileexchange/22022-matlab2tikz-matlab2tikz
%where you can also make suggestions and rate matlab2tikz.
%
\begin{tikzpicture}

\begin{axis}[%
width=0.951\figurewidth,
height=\figureheight,
at={(0\figurewidth,0\figureheight)},
scale only axis,
xmin=-2.0700408997955,
xmax=4.90337423312883,
xlabel style={font=\color{white!15!black}},
xlabel={real part},
xlabel near ticks,
ymin=-2.75,
ymax=2.75,
%ylabel style={font=\color{white!15!black}},
%ylabel={imaginary part},
%ylabel near ticks,
%axis background/.style={fill=white},
legend style={legend cell align=left, align=left, draw=white!15!black},
disabledatascaling,
axis equal
]
\begin{pgfonlayer}{background}
  \draw[fill=black!20!white, draw=black!20!white] (axis cs:1.4167,0) circle (2.75);
\end{pgfonlayer}
%\draw[fill=black!20!white, draw=black!20!white] (axis cs:1.4167,0) circle (2.75);
\addplot [color=black,forget plot]
  table[row sep=crcr]{%
4	0\\
4	0\\
4	0\\
4	0\\
4	0\\
4	0\\
4	0\\
4	-6.21724893790088e-15\\
4	-6.30606677987089e-14\\
4	-6.3504757008559e-13\\
4	-6.3495875224362e-12\\
4	-7.99982302623903e-12\\
4	-6.3495875224362e-11\\
4	-6.34960528600459e-10\\
4	-6.34960439782617e-09\\
4	-6.34960422019049e-08\\
4	-7.99999995138023e-08\\
3.9999999999999	-6.34960420242681e-07\\
3.99999999998992	-6.34960420774532e-06\\
3.99999999899206	-6.34960420624523e-05\\
3.9999999984	-7.99999999676445e-05\\
3.99999989920632	-0.000634960404786968\\
3.999989920657	-0.00634958820791304\\
3.999984000064	-0.00799996800012817\\
3.99899231708027	-0.0634800461094593\\
3.9984006397441	-0.079968012794882\\
3.99793317649804	-0.0909011674757181\\
3.99732917370828	-0.103325562441295\\
3.99654881118492	-0.117442941703917\\
3.9955406965515	-0.133481565793814\\
3.99423852961729	-0.151699330881392\\
3.99255682359406	-0.17238707825341\\
3.9903854287711	-0.195872011619\\
3.98758254397725	-0.22252107962381\\
3.98396583240476	-0.252744091623315\\
3.9793011844799	-0.286996204010551\\
3.97328859925838	-0.325779225913639\\
3.96554460286519	-0.369640925964027\\
3.95558061614469	-0.419171150914693\\
3.94277677257346	-0.474993065158917\\
3.92635095086807	-0.537747165580323\\
3.90532334723866	-0.608064916326581\\
3.90168373075187	-0.619353685864273\\
3.87847794832012	-0.686527929275298\\
3.84432401564926	-0.773607733479556\\
3.80106343167746	-0.869580654726294\\
3.74657371571341	-0.974412774741307\\
3.67842168356242	-1.08761236299864\\
3.59392847033223	-1.20805299199347\\
3.49031115707938	-1.33378133705819\\
3.3649259564279	-1.46184032420418\\
3.21562736224065	-1.58815934848234\\
3.04123097633317	-1.70758251745685\\
2.8420227861645	-1.81411069882182\\
2.62020285358139	-1.90140695812588\\
2.38011066380083	-1.96354676116075\\
2.12808749583035	-1.99589418392156\\
1.87191250416965	-1.99589418392156\\
1.61988933619917	-1.96354676116075\\
1.37979714641861	-1.90140695812588\\
1.1579772138355	-1.81411069882182\\
1.1364146136666	-1.8039457532064\\
0.958769023666834	-1.70758251745685\\
0.784372637759352	-1.58815934848234\\
0.635074043572103	-1.46184032420418\\
0.509688842920626	-1.33378133705819\\
0.406071529667772	-1.20805299199347\\
0.321578316437581	-1.08761236299865\\
0.25342628428659	-0.974412774741307\\
0.19893656832254	-0.869580654726294\\
0.155675984350737	-0.773607733479556\\
0.121522051679879	-0.686527929275298\\
0.0946766527613359	-0.608064916326582\\
0.0736490491319272	-0.537747165580323\\
0.0572232274265433	-0.474993065158917\\
0.0444193838553086	-0.419171150914694\\
0.0344553971348155	-0.369640925964028\\
0.0267114007416175	-0.32577922591364\\
0.0206988155201003	-0.286996204010552\\
0.0160341675952417	-0.252744091623315\\
0.0158112634789833	-0.250988162794844\\
0.0124174560227557	-0.22252107962381\\
0.00961457122889731	-0.195872011619\\
0.00744317640593932	-0.17238707825341\\
0.00576147038270832	-0.151699330881393\\
0.00445930344850352	-0.133481565793814\\
0.00345118881507674	-0.117442941703917\\
0.00267082629172262	-0.103325562441295\\
0.00206682350195921	-0.0909011674757183\\
0.00159936025589764	-0.0799680127948821\\
0.000158733805841562	-0.0251974210375809\\
1.59999360002568e-05	-0.007999968000128\\
1.58740042200835e-06	-0.00251984109979014\\
1.58740104567103e-08	-0.000251984208978975\\
1.59999999935779e-09	-7.9999999968e-05\\
1.58740105193525e-10	-2.51984209968975e-05\\
1.58740105189189e-12	-2.51984209978875e-06\\
1.58740105607856e-14	-2.51984209978974e-07\\
1.6000000129382e-15	-7.99999999999999e-08\\
1.58740107556036e-16	-2.51984209978975e-08\\
1.5874012144266e-18	-2.51984209978975e-09\\
1.58740260308906e-20	-2.51984209978975e-10\\
1.58742208488702e-22	-2.51984209978975e-11\\
1.60001359111977e-23	-8.00000000000001e-12\\
1.58750499939795e-24	-2.51984209978975e-12\\
1.59057221388135e-26	-2.51984209978975e-13\\
1.58207814466889e-28	-2.51984209978975e-14\\
1.83284785865704e-30	-2.51984209978975e-15\\
1.54295828093213e-32	-2.51984209978975e-16\\
4.8985871965894e-33	-7.99999999999998e-17\\
1.54295828093213e-33	-2.51984209978975e-17\\
1.54295828093213e-34	-2.51984209978975e-18\\
1.54295828093213e-35	-2.51984209978975e-19\\
1.54295828093213e-36	-2.51984209978975e-20\\
};

\addplot [color=black,forget plot]
  table[row sep=crcr]{%
4	-0\\
4	-0\\
4	-0\\
4	-0\\
4	-0\\
4	-0\\
4	-0\\
4	6.21724893790088e-15\\
4	6.30606677987089e-14\\
4	6.3504757008559e-13\\
4	6.3495875224362e-12\\
4	7.99982302623903e-12\\
4	6.3495875224362e-11\\
4	6.34960528600459e-10\\
4	6.34960439782617e-09\\
4	6.34960422019049e-08\\
4	7.99999995138023e-08\\
3.9999999999999	6.34960420242681e-07\\
3.99999999998992	6.34960420774532e-06\\
3.99999999899206	6.34960420624523e-05\\
3.9999999984	7.99999999676445e-05\\
3.99999989920632	0.000634960404786968\\
3.999989920657	0.00634958820791304\\
3.999984000064	0.00799996800012817\\
3.99899231708027	0.0634800461094593\\
3.9984006397441	0.079968012794882\\
3.99793317649804	0.0909011674757181\\
3.99732917370828	0.103325562441295\\
3.99654881118492	0.117442941703917\\
3.9955406965515	0.133481565793814\\
3.99423852961729	0.151699330881392\\
3.99255682359406	0.17238707825341\\
3.9903854287711	0.195872011619\\
3.98758254397725	0.22252107962381\\
3.98396583240476	0.252744091623315\\
3.9793011844799	0.286996204010551\\
3.97328859925838	0.325779225913639\\
3.96554460286519	0.369640925964027\\
3.95558061614469	0.419171150914693\\
3.94277677257346	0.474993065158917\\
3.92635095086807	0.537747165580323\\
3.90532334723866	0.608064916326581\\
3.90168373075187	0.619353685864273\\
3.87847794832012	0.686527929275298\\
3.84432401564926	0.773607733479556\\
3.80106343167746	0.869580654726294\\
3.74657371571341	0.974412774741307\\
3.67842168356242	1.08761236299864\\
3.59392847033223	1.20805299199347\\
3.49031115707938	1.33378133705819\\
3.3649259564279	1.46184032420418\\
3.21562736224065	1.58815934848234\\
3.04123097633317	1.70758251745685\\
2.8420227861645	1.81411069882182\\
2.62020285358139	1.90140695812588\\
2.38011066380083	1.96354676116075\\
2.12808749583035	1.99589418392156\\
1.87191250416965	1.99589418392156\\
1.61988933619917	1.96354676116075\\
1.37979714641861	1.90140695812588\\
1.1579772138355	1.81411069882182\\
1.1364146136666	1.8039457532064\\
0.958769023666834	1.70758251745685\\
0.784372637759352	1.58815934848234\\
0.635074043572103	1.46184032420418\\
0.509688842920626	1.33378133705819\\
0.406071529667772	1.20805299199347\\
0.321578316437581	1.08761236299865\\
0.25342628428659	0.974412774741307\\
0.19893656832254	0.869580654726294\\
0.155675984350737	0.773607733479556\\
0.121522051679879	0.686527929275298\\
0.0946766527613359	0.608064916326582\\
0.0736490491319272	0.537747165580323\\
0.0572232274265433	0.474993065158917\\
0.0444193838553086	0.419171150914694\\
0.0344553971348155	0.369640925964028\\
0.0267114007416175	0.32577922591364\\
0.0206988155201003	0.286996204010552\\
0.0160341675952417	0.252744091623315\\
0.0158112634789833	0.250988162794844\\
0.0124174560227557	0.22252107962381\\
0.00961457122889731	0.195872011619\\
0.00744317640593932	0.17238707825341\\
0.00576147038270832	0.151699330881393\\
0.00445930344850352	0.133481565793814\\
0.00345118881507674	0.117442941703917\\
0.00267082629172262	0.103325562441295\\
0.00206682350195921	0.0909011674757183\\
0.00159936025589764	0.0799680127948821\\
0.000158733805841562	0.0251974210375809\\
1.59999360002568e-05	0.007999968000128\\
1.58740042200835e-06	0.00251984109979014\\
1.58740104567103e-08	0.000251984208978975\\
1.59999999935779e-09	7.9999999968e-05\\
1.58740105193525e-10	2.51984209968975e-05\\
1.58740105189189e-12	2.51984209978875e-06\\
1.58740105607856e-14	2.51984209978974e-07\\
1.6000000129382e-15	7.99999999999999e-08\\
1.58740107556036e-16	2.51984209978975e-08\\
1.5874012144266e-18	2.51984209978975e-09\\
1.58740260308906e-20	2.51984209978975e-10\\
1.58742208488702e-22	2.51984209978975e-11\\
1.60001359111977e-23	8.00000000000001e-12\\
1.58750499939795e-24	2.51984209978975e-12\\
1.59057221388135e-26	2.51984209978975e-13\\
1.58207814466889e-28	2.51984209978975e-14\\
1.83284785865704e-30	2.51984209978975e-15\\
1.54295828093213e-32	2.51984209978975e-16\\
4.8985871965894e-33	7.99999999999998e-17\\
1.54295828093213e-33	2.51984209978975e-17\\
1.54295828093213e-34	2.51984209978975e-18\\
1.54295828093213e-35	2.51984209978975e-19\\
1.54295828093213e-36	2.51984209978975e-20\\
};

\end{axis}
\end{tikzpicture}%
\end{minipage}\hfill %
\mbox{} % Right margin marker
    \caption{The figure illustrates that the conditions for \eqref{eq:PRfunction} to be strictly positive real are indeed met as long as $\alpha_\text{s}>0$. The different cases $\alpha_\text{s}>1/\kappa$, $\alpha_\text{s}=1/\kappa$, and $\alpha_\text{s}<1/\kappa$ are shown on the left, center, and right, respectively. The graph of $P_\text{T}(i\omega)$, $\omega \in \mathbb{R}$ is drawn in black (solid), whereas the disk $D$ is shown in grey. The condition number $\kappa$ is set to $4$, and $\alpha_\text{s}$ is set to $0.55$ (left), $0.25$ (center) and $0.01$ (right). }
    \label{Fig:dC}
\end{figure*}
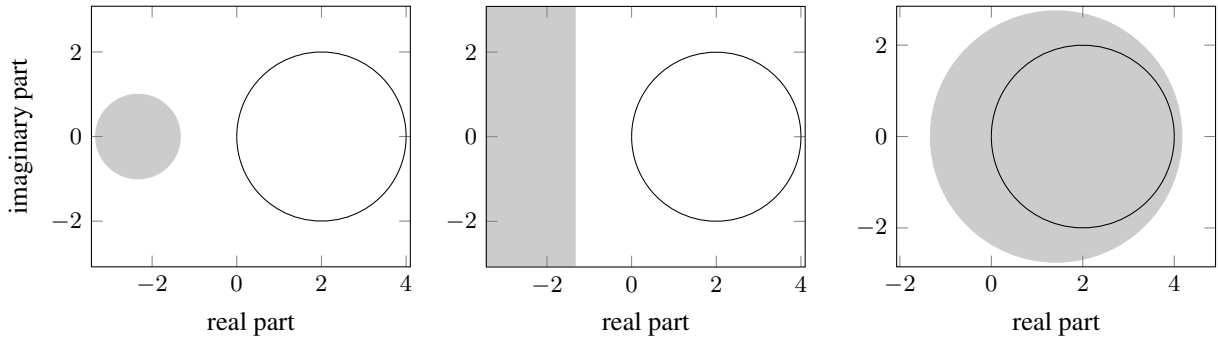

\section{Proof of Prop.~\ref{Prop:Conv}}\label{App:Prop3p3}
We rewrite the dynamics \eqref{eq:Lure} as a Lur\'{e} problem with $\nabla f(x)/L - x/\kappa$ as the nonlinear feedback term, as illustrated in Fig.~\ref{Fig:Lure}. The linear system $P_\text{T}$ is given by \eqref{eq:feedback}, where we slightly abuse notation and replace the single components $x_j(t)\in \mathbb{R}, \dot{x}_j(t)\in \mathbb{R}, \dots$ by the vectors $x(t)\in \mathbb{R}^n, \dot{x}(t) \in \mathbb{R}^n, \dots$. By assumption, the nonlinear feedback term is sector bounded in $[\alpha_\text{s}-1/\kappa,1-1/\kappa]$ \citep[p.~232, Def.~6.2]{Khalil}. The circle criterion \citep[p.~265]{Khalil} implies that the dynamics \eqref{eq:Lure} are globally asymptotically stable in the sense of Lyapunov if the transfer function
\begin{equation}
(1 + (1-1/\kappa) P_\text{T}(s))~(1 + (\alpha_\text{s}-1/\kappa)  P_\text{T}(s))^{-1} \label{eq:PRfunction}
\end{equation}
is strictly positive real. This has the following graphical interpretation \citep[p.~268]{Khalil}: Let $D\subset \mathbb{C}$ be the closed disk connecting the two points $-1/(\alpha_\text{s} - 1/\kappa)$ and $-1/(1-1/\kappa)$ on the real line. Then, the transfer function \eqref{eq:PRfunction} is strictly positive real if
\begin{itemize}
    \item case $\alpha_\text{s} > 1/\kappa$: the graph of $P_\text{T}(i\omega)$, $\omega \in \mathbb{R}$ does not enter $D$;
    \item case $\alpha_\text{s} = 1/\kappa$: the graph of $P_\text{T}(i\omega)$, $\omega \in \mathbb{R}$ lies strictly to the right of the vertical line that crosses the point $-1/(1-1/\kappa)$ on the real line;
    \item case $\alpha_\text{s} < 1/\kappa$: the graph of $P_\text{T}(i\omega)$ is contained in the interior of $D$.
\end{itemize}
The three different cases are illustrated in Fig.~\ref{Fig:dC}.
It remains to evaluate $P_\text{T}(i\omega)$. Due to the fact that $g_j$ and $h_j$ are chosen according to \eqref{eq:fastAlg}, we obtain
\begin{equation}
    P_\text{T}(s)=\frac{\kappa^{1-1/k}}{s+1/\kappa^{1/k}},
\end{equation}
where the common factor $(s+1/\kappa^{1/k})^{k-1}$ of the numerator and denominator cancels out.\footnote{The proof of the circle criterion relies on the Kalman-Yakubovich-Popov Lemma, which does not apply when there is a cancellation in the numerator and denominator of $P_\text{T}(s)$ (i.e. in case the realization is non-minimal). However, the lemma can be generalized to accommodate non-minimal realizations that are stabilizable or detectable as done in \citet{PRL}.} Thus, the graph of the function $P_\text{T}(i\omega)$ for $\omega\in \mathbb{R}$ is given by the circle connecting the origin with the point $\kappa$ on the real line. The situation is illustrated in Fig.~\ref{Fig:dC}, and it can be verified that \eqref{eq:PRfunction} is indeed strictly positive real as long as $\alpha_\text{s}>0$. This concludes the first part of the proof.

In view of Fig.~\ref{Fig:dC}, we realize that for $\alpha_\text{s} \geq 1/\kappa$ the graph of $P_\text{T}(j\omega)$, $\omega\in \mathbb{R}$, is nowhere close to the stability boundary indicated by the shaded regions. In fact, we can replace $P_\text{T}(s)$ by $P_\text{T}(s-1/\kappa^{1/k})$ in \eqref{eq:PRfunction}, which, for $\alpha_\text{s}=1/\kappa$, reduces to
\begin{equation*}
1+(1-1/\kappa) P_\text{T}(s-1/\kappa^{1/k}),
\end{equation*}
and can be verified to be positive real. This implies that the dynamics \eqref{eq:Lure} converge at least with rate $1/\kappa^{1/k}$.

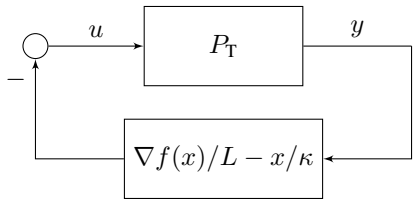
\begin{figure}
    \centering
    \tikzstyle{block} = [draw, rectangle, 
    minimum height=3em, minimum width=6em]
\tikzstyle{sum} = [draw, circle, node distance=1cm]
\tikzstyle{input} = [coordinate]
\tikzstyle{output} = [coordinate]
\tikzstyle{pinstyle} = [pin edge={to-,thin,black}]

% The block diagram code is probably more verbose than necessary
\begin{tikzpicture}[auto, node distance=2cm,>=latex']
    % We start by placing the blocks
    \node [block] (system) {$P_\text{T}$};
            								 
    \node [block, below of=system,node distance=1.5cm] (grad) {$\nabla f(x)/L-x/\kappa$};

    % We draw an edge between the controller and system block to 
    % calculate the coordinate u. We need it to place the measurement block. 
    \node [sum, left of=system, node distance=2.5cm] (ln) {};
 	\node (rn) [coordinate,right of=system, node distance=2.5cm] {};
 	\draw [-] (system) -- node[name=y]{$y$} (rn);   
 	\draw [->] (rn) |- (grad); 
	\draw [->] (grad) -| node [pos=0.9] {$-$} (ln);
	\draw [->] (ln) -- node[name=u]{$u$} (system);
    % Once the nodes are placed, connecting them is easy. 
%    \draw [draw,->] (input) -- node {$r$} (sum);
%    \draw [->] (sum) -- node {$e$} (controller);
%    \draw [->] (system) -- node [name=y] {$y$}(output);
%    \draw [->] (y) |- (measurements);
%    \draw [->] (measurements) -| node[pos=0.99] {$-$} 
%        node [near end] {$y_m$} (sum);
\end{tikzpicture}
    \caption{Block diagram that illustrates the dynamics \eqref{eq:Lure}.}
    \label{Fig:Lure}
\end{figure}

\end{document}